\tikzset{
  VertexStyle/.append style = {shape=circle,draw, fill=black, minimum size=9pt, inner sep=-1pt},
  EdgeStyle/.append style = {-, thick},
  LoopStyle/.append style = {-}}
\tikzset{>={Latex[width=2.5mm,length=2.5mm]}}
\definecolor{donkergroen}{RGB}{46,148,0}
\definecolor{donkerrood}{RGB}{204,0,0}
\definecolor{blauw}{RGB}{61,158,255}
\definecolor{donkerblauw}{RGB}{0,0,255}
\definecolor{donkergroen}{RGB}{46,148,0}
\definecolor{donkerrood}{RGB}{204,0,0}
\newif\if@borderstar
\def\bordermatrix{\@ifnextchar*{%
\@borderstartrue\@bordermatrix@i}{\@borderstarfalse\@bordermatrix@i*}%
}
\def\@bordermatrix@i*{\@ifnextchar[{\@bordermatrix@ii}{\@bordermatrix@ii[()]}}
\def\@bordermatrix@ii[#1]#2{%
\begingroup
\m@th\@tempdima8.75\p@\setbox\z@\vbox{%
\def\cr{\crcr\noalign{\kern 2\p@\global\let\cr\endline }}%
\ialign {$##$\hfil\kern 2\p@\kern\@tempdima & \thinspace %
\hfil $##$\hfil && \quad\hfil $##$\hfil\crcr\omit\strut %
\hfil\crcr\noalign{\kern -\baselineskip}#2\crcr\omit %
\strut\cr}}%
\setbox\tw@\vbox{\unvcopy\z@\global\setbox\@ne\lastbox}%
\setbox\tw@\hbox{\unhbox\@ne\unskip\global\setbox\@ne\lastbox}%
\setbox\tw@\hbox{%
$\kern\wd\@ne\kern -\@tempdima\left\@firstoftwo#1%
\if@borderstar\kern2pt\else\kern -\wd\@ne\fi%
\global\setbox\@ne\vbox{\box\@ne\if@borderstar\else\kern 2\p@\fi}%
\vcenter{\if@borderstar\else\kern -\ht\@ne\fi%
\unvbox\z@\kern-\if@borderstar2\fi\baselineskip}%
\if@borderstar\kern-2\@tempdima\kern2\p@\else\,\fi\right\@secondoftwo#1 $%
}\null \;\vbox{\kern\ht\@ne\box\tw@}%
\endgroup
}
\newcommand\mynobreakpar{\par\nobreak\@afterheading} 
\newcommand{\N}{\mathbb{N}}
\newcommand{\Z}{\mathbb{Z}}
\newcommand{\D}{\mathbb{D}}
\newcommand{\R}{\mathbb{R}}
\newtheorem{conjecture}{Conjecture}[section]
\newtheorem{theorem}{Theorem}[section]
\newtheorem{lemma}[theorem]{Lemma}
\newtheorem{proposition}[theorem]{Proposition}
\newtheorem{corollary}[theorem]{Corollary}
\newtheorem{question}[theorem]{Question}
\theoremstyle{definition}
\newtheorem{examp}{Example}[section]
\newtheorem*{examp*}{Example}
\theoremstyle{plain}
\newcounter{thm}[section]
\title{{\large \textbf{ON DIHEDRAL FLOWS IN EMBEDDED GRAPHS}}}
\author{Bart Litjens\thanks{University of Amsterdam, Netherlands. Email: \texttt{bart\_litjens@hotmail.com}. Supported by the European Research Council under the European Union's Seventh Framework Programme (FP7/2007-2013) / ERC grant agreement n$\mbox{}^{\circ}$ 339109.}}
\date{\vspace{-5ex}}
\begin{document}
\maketitle

\noindent {\small \textbf{Abstract.} Let $\Gamma$ be a multigraph with for each vertex a cyclic order of the edges incident with it. For $n \geq 3$, let $D_{2n}$ be the dihedral group of order $2n$. Define $\D := \{\begin{psmallmatrix} \pm 1 & a \\ 0 & 1 \end{psmallmatrix} \mid a \in \Z\}$. Goodall, Krajewski, Regts and Vena in 2016 asked whether $\Gamma$ admits a nowhere-identity $D_{2n}$-flow if and only if it admits a nowhere-identity $\D$-flow with $|a| < n$ (a `nowhere-identity dihedral $n$-flow'). We give counterexamples to this statement and provide general obstructions. Furthermore, the complexity of deciding the existence of nowhere-identity $2$-flows is discussed. Lastly, graphs in which the equivalence of the existence of flows as above is true, are described. We focus particularly on cubic graphs.}\\

\noindent \textbf{Key words:} embedded graph, cubic graph, dihedral group, flow, nonabelian flow\\
\noindent \textbf{MSC 2010:} 05C10, 05C15, 05C21, 20B35

\section{Introduction}

Let $\Gamma = (V,E)$ be a multigraph and $G$ an additive abelian group. A \textit{nowhere-zero} $G$\textit{-flow} on $\Gamma$ is an assignment of non-zero group elements to edges of $\Gamma$ such that, for some orientation of the edges, Kirchhoff's law is satisfied at every vertex. The following existence theorem is due to Tutte \cite{tutte49}.

\begin{theorem}\label{theorem:abelian}
Let $\Gamma$ be a multigraph and $n \in \N$. The following are equivalent statements.
\begin{enumerate}
\item There exists a nowhere-zero $G$-flow on $\Gamma$, for each abelian group $G$ of order $n$.
\item There exists a nowhere-zero $G$-flow on $\Gamma$, for some abelian group $G$ of order $n$.
\item There exists a nowhere-zero $\Z$-flow on $\Gamma$ with values in $\{\pm 1, ..., \pm(n-1)\}$.
\end{enumerate}
\end{theorem}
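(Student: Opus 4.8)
The plan is to prove the equivalence in two parts: $(1)\Leftrightarrow(2)$ by showing that the number of nowhere-zero $G$-flows on $\Gamma$ depends only on $|G|$, and $(2)\Leftrightarrow(3)$ by reducing an integer flow modulo $n$ in one direction and lifting a $\Z/n\Z$-flow to a small-valued integer flow in the other. (That $(1)\Rightarrow(2)$ is in any case trivial, since $\Z/n\Z$ is an abelian group of order $n$.)

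For the first part, fix a reference orientation. For a finite abelian group $G$, the set of all $G$-flows on $\Gamma$ (circulations, with zeros allowed) is the subgroup of $G^{E}$ cut out by the $|V|$ linear Kirchhoff equations, hence is isomorphic to $G^{\,|E|-|V|+c}$, where $c$ is the number of connected components; in particular it has exactly $|G|^{\,|E|-|V|+c}$ elements, and the same count applies to every subgraph $\Gamma - S$ obtained by deleting an edge subset $S\subseteq E$. Möbius inversion over the Boolean lattice of edge sets ``forced to be zero'' then expresses the number $N_G(\Gamma)$ of nowhere-zero $G$-flows as
\[
N_G(\Gamma)\;=\;\sum_{S\subseteq E}(-1)^{|S|}\,|G|^{\,|E|-|S|-|V|+c(\Gamma - S)},
\]
a fixed polynomial in $|G|$. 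Hence $N_G(\Gamma)>0$ holds for one abelian group of order $n$ if and only if it holds for all of them, which yields $(1)\Leftrightarrow(2)$ and, moreover, lets us assume the flow in $(2)$ is a $\Z/n\Z$-flow.

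For $(3)\Rightarrow(2)$, reduce an integer flow with values in $\{\pm1,\dots,\pm(n-1)\}$ modulo $n$: each value has absolute value strictly less than $n$, so it stays nonzero in $\Z/n\Z$, and Kirchhoff's law survives the reduction. For the reverse implication, start (via the step just proved) from a nowhere-zero $\Z/n\Z$-flow $\phi$, and choose representatives $\phi(e)\in\{1,\dots,n-1\}$, so that for each edge the two ``small lifts'' $\phi(e)$ and $\phi(e)-n$ both lie in $\{\pm1,\dots,\pm(n-1)\}$. Any system of such lifts $f$ satisfies Kirchhoff modulo $n$, so every vertex excess $\mathrm{div}_f(v)$ is a multiple of $n$; I would take a system minimizing $\sum_v|\mathrm{div}_f(v)|$ and show the minimum is $0$. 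If some vertex had positive excess, an augmenting-path argument --- flip the chosen lifts along a path towards a vertex of negative excess, which leaves the internal vertices' excesses unchanged and decreases the objective by $2n$ --- would produce a cheaper system, unless no such path exists; in that case the set $S$ of vertices reachable by admissible flips has strictly negative net outflow, contradicting $\sum_{v\in S}\mathrm{div}_f(v)>0$. So the minimizer is an honest $\Z$-flow with values in $\{\pm1,\dots,\pm(n-1)\}$, which gives $(2)\Rightarrow(3)$.

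The hard part is this lifting step: the flow-polynomial computation is essentially bookkeeping, whereas the lift needs the right potential function and a max-flow/min-cut-style exchange argument, together with some care that the flips performed along the augmenting path do not corrupt the reachability analysis used in the no-augmenting-path case.
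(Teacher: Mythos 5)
The paper does not prove this statement: it is quoted as Tutte's theorem with a citation, so there is no internal proof to compare against. Judged on its own, your argument is correct and is essentially the classical proof. The $(1)\Leftrightarrow(2)$ step via the deletion inclusion--exclusion formula for the flow polynomial is sound; the one point you gloss over is why the group of $G$-circulations on a graph is exactly $G^{|E|-|V|+c}$ for an \emph{arbitrary} abelian $G$ (not just a field), which needs either the total unimodularity of the incidence matrix (trivial Smith normal form) or, more simply, the observation that a circulation is freely determined by its values on the edges outside a spanning forest. The $(2)\Rightarrow(3)$ lifting step is Tutte's standard argument: two admissible lifts per edge, minimize $\sum_v|\mathrm{div}_f(v)|$ (all divergences being multiples of $n$), and either find an augmenting path to a vertex of negative divergence --- which should be taken simple so no edge is flipped twice --- or derive a contradiction from the reachable set $S$, whose net outflow is simultaneously positive (all its vertices have nonnegative divergence and one is positive) and nonpositive (every cut edge is saturated in the direction preventing escape). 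Your closing worry about flips ``corrupting the reachability analysis'' is a non-issue in this formulation: you perform the flips along a single path once, obtain a strictly smaller objective, and contradict minimality rather than iterating. With these small points tightened, the proof is complete.
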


\noindent A flow as in the third statement above is called a \textit{nowhere-zero} $n$\textit{-flow}. Generalizing the equivalence of the first two statements of Theorem \ref{theorem:abelian}, Tutte showed in \cite{tutte54}:

\begin{theorem}\label{theorem:number}
For each multigraph $\Gamma$, there exists a polynomial $P_{\Gamma}$ such that for every finite abelian group $G$ of order $n$, the number of nowhere-zero $G$-flows on $\Gamma$ is $P_{\Gamma}(n)$.
\end{theorem}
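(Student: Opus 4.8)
The plan is to produce $P_\Gamma$ explicitly, by an inclusion--exclusion over the edge set, after first settling the much easier count of \emph{all} $G$-flows with no restriction on the values taken. Fix once and for all a reference orientation of the edges of $\Gamma$; a $G$-flow is then a map $\phi\colon E\to G$ satisfying Kirchhoff's law at every vertex, and the nowhere-zero ones are those with $\phi(e)\neq 0$ for every $e\in E$. For a multigraph $H$ let $c(H)$ be its number of connected components, and for an abelian group $G$ of order $n$ write $F(H,G)$ for the number of nowhere-zero $G$-flows on $H$; the goal is a polynomial $P_\Gamma$ with $F(\Gamma,G)=P_\Gamma(n)$.

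\textbf{Step 1 (all flows are group-independent).} I would first show that for every multigraph $H$ and every abelian group $G$ of order $n$, the total number of $G$-flows on $H$ (nowhere-zero or not) equals $n^{\,|E(H)|-|V(H)|+c(H)}$, depending on $G$ only through its order. To see this, fix a spanning forest $T$ of $H$. For any assignment of group elements to the co-tree edges $E(H)\setminus T$ there is exactly one extension to a $G$-flow: the value on a forest edge $f\in T$ is forced by Kirchhoff's law summed over one side of the fundamental cut of $f$ (it equals the appropriately signed sum of the co-tree values crossing that cut), and since the fundamental cuts of $T$ span the cocycle space over $\Z$, the resulting assignment satisfies Kirchhoff's law at every vertex. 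Hence $G$-flows on $H$ are in bijection with $G^{E(H)\setminus T}$, of which there are $n^{\,|E(H)|-|V(H)|+c(H)}$. (Equivalently: the incidence matrix of $H$ is totally unimodular, so the kernel of the induced homomorphism $G^{E}\to G^{V}$ is free of the expected rank; the spanning-forest argument makes this elementary and avoids Smith normal form.)

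\textbf{Step 2 (inclusion--exclusion).} For $S\subseteq E$, restriction identifies the $G$-flows on $\Gamma$ that vanish on every edge of $S$ with the $G$-flows on the deleted multigraph $\Gamma\setminus S$, of which by Step 1 there are $n^{\,|E\setminus S|-|V|+c(\Gamma\setminus S)}$ (and $|E\setminus S|-|V|+c(\Gamma\setminus S)\ge 0$ always, so these are genuine monomials). Sieving over which edges are forced to carry $0$ gives
\[
 F(\Gamma,G)\;=\;\sum_{S\subseteq E}(-1)^{|S|}\,n^{\,|E\setminus S|-|V|+c(\Gamma\setminus S)}.
\]
The right-hand side depends on $G$ only through $n=|G|$ and is a polynomial in $n$ with integer coefficients, all of whose data — the exponents $|E\setminus S|-|V|+c(\Gamma\setminus S)$ — are determined by $\Gamma$ alone; so $P_\Gamma(x):=\sum_{S\subseteq E}(-1)^{|S|}x^{\,|E\setminus S|-|V|+c(\Gamma\setminus S)}$ works.

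\textbf{Alternative route and the main point.} One could instead induct on $|E|$ via deletion--contraction: $F(\Gamma,G)=0$ if $\Gamma$ has a bridge, $F(\Gamma,G)=(n-1)F(\Gamma\setminus e,G)$ if $e$ is a loop, $F(\Gamma,G)=F(\Gamma/e,G)-F(\Gamma\setminus e,G)$ for any other edge $e$, and $F=1$ on an edgeless graph; the induction then produces an integer polynomial in $n$. Either way the only genuine content is the group-independence of Step 1 — that the number of solutions over $G$ of the homogeneous linear system given by the incidence matrix is $n^{\dim}$ regardless of how $G$ is built from its order. That is the step I would be most careful with, and the spanning-forest parametrization is the cleanest way to pin it down; everything after that is bookkeeping.
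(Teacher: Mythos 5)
Your argument is correct and complete: Step 1 (the count of all $G$-flows as $n^{|E|-|V|+c}$ via the spanning-forest/fundamental-cut parametrization) together with the inclusion--exclusion of Step 2 is precisely the classical proof of this result. The paper itself does not prove this theorem --- it only cites Tutte's 1954 paper --- so there is no in-paper argument to compare against; your proof is the standard one found in the cited literature, and the one genuinely nontrivial point (that the flow count over $G$ depends only on $|G|$, i.e.\ that the kernel of the incidence map $G^E\to G^V$ has size $n^{|E|-|V|+c}$ for \emph{every} abelian $G$ of order $n$) is exactly the point you isolate and justify correctly.
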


\noindent Kochol showed that the number of nowhere-zero $n$-flows is generally larger than the number of nowhere-zero $G$-flows, but still is a polynomial in $n$ \cite{kochol02}.\\
\indent For graphs, flows with values in a finite nonabelian group are generally not well-defined. For Kirchhoff's law to be unambiguously defined requires specifying for each vertex a cyclic order on those edges incident with it. The collection of cyclic orders at vertices forms a rotation system for the graph. Thus, flows with values in a nonabelian group are well-defined for a cellularly embedded graph in a closed orientable surface. By augmenting such a vertex rotation system by a signature on edges, it is furthermore possible to define flows for graphs embedded in non-orientable surfaces. In that case, edges receive bi-orientations rather than orientations.\\
\indent The analogue of Theorem \ref{theorem:number} in this more general setting is false. If a graph $\Gamma$ is embedded in a closed orientable surface $S$, then the number of nowhere-identity $G$-flows (with $G$ a finite multiplicative group) depends on the Euler characteristic of $S$ and the multiset of dimensions of irreducible representations of $G$ \cite{goodall16,litjens17}. In case $\Gamma$ is embedded in a closed non-orientable surface $S$, then the trichotomy of types of irreducible representations according to the Frobenius-Schur indicator moreover determines the number of nowhere-identity $G$-flows \cite{goodall17}.\\
\indent It is not immediately clear how a counterpart of Theorem \ref{theorem:abelian} would look like for nonabelian groups. In \cite{goodall16} a question is posed for the \textit{dihedral group} $D_{2n}$, with $n \geq 3$. By $\Z_n$ we denote the cyclic group of order $n$. Identify $D_{2n} = \langle r,s \mid r^n = srsr = s^2 = 1 \rangle$ with the group
\begin{equation}\label{equation:dihedraal}
\left\{\begin{pmatrix} \pm 1 & a \\ 0 & 1 \end{pmatrix} \mid a \in \Z_n\right\}, \text{ via } r = \begin{pmatrix} 1 & 1 \\ 0 & 1 \end{pmatrix} \text{ and } s = \begin{pmatrix} -1 & 0 \\ 0 & 1 \end{pmatrix}.
\end{equation}
Elements of the subgroup $\langle r \rangle$ of $D_{2n}$ generated by $r$ are called \textit{rotations}. They form a group isomorphic to $\Z_n$. Elements of the cosets $s\langle r \rangle$ in $D_{2n}$ are called \textit{reflections}. Each reflection is of order $2$. We define 
\[
\D := \left\{\begin{pmatrix} \pm 1 & a \\ 0 & 1 \end{pmatrix} \mid a \in \Z\right\} \hspace{1mm} \text{and} \hspace{1mm} \D^{<n} := \left\{\begin{pmatrix} \pm 1 & a \\ 0 & 1 \end{pmatrix} \mid a \in \Z, |a| < n\right\}.
\]
\noindent Let 
\begin{equation}\label{equation:pin}
\Pi_n: \D \rightarrow D_{2n},
\end{equation}
denote the group homomorphism that reduces the upper right matrix entry modulo $n$. Elements in $\D$ that are mapped to rotations in $D_{2n}$ by $\Pi_n$ are called rotations, and likewise for reflections.\\
\indent Let $\Gamma$ be a multigraph embedded in a closed orientable surface. A nowhere-identity $\D$-flow on $\Gamma$ (defined formally in Section \ref{subsection:flow}) which takes values in $\D^{< n}$, is called a \textit{nowhere-identity dihedral $n$-flow}. The authors of \cite{goodall16} asked whether the following statement is true:
\begin{align}\label{equation:main}
\Gamma \textit{ admits a nowhere-identity }D_{2n}\textit{-flow } \Longleftrightarrow \hspace{1mm}\Gamma \textit{ admits a nowhere-identity dihedral $n$-flow}.
\end{align}
The ``$\Longleftarrow$" direction is easily seen to hold. Indeed, observe that by composition with $\Pi_n$, a nowhere-identity dihedral $n$-flow yields a nowhere-identity $D_{2n}$-flow. In this paper we show that the ``$\Longrightarrow$" direction is false in general. A counterexample to statement (\ref{equation:main}) for the case $n=3$ is given by the following graph (see Section \ref{subsection:counterexamples} for more details):
\vspace{-6mm}
\begin{figure}[H]
\centering
\begin{tikzpicture}
\SetGraphUnit{1.4}
\Vertex{a}
\EA(a){b}
\NOWE(a){c}
\SOWE(a){d}
\draw[style={-,thick,color=black}] (a)--(b)node[pos=0.5,anchor=south]{};
\draw[style={-,thick,color=black}] (a)--(c)node[pos=0.5,anchor=south]{};
\draw[style={-,thick,color=black}] (a)--(d)node[pos=0.5,anchor=north]{};
\draw[style={-,thick,color=black}] (c) edge[in=180,out=260,loop,min distance=20mm] node[below] {} (c);
\draw[style={-,thick,color=black}] (c) edge[in=140,out=220,loop,min distance=20mm] node[below] {} (c);
\draw[style={-,thick,color=black}] (d) edge[in=100,out=180,loop,min distance=20mm] node[below] {} (d);
\draw[style={-,thick,color=black}] (d) edge[in=140,out=220,loop,min distance=20mm] node[below] {} (d);
\draw[style={-,thick,color=black}] (b) edge[in=300,out=20,loop,min distance=20mm] node[below] {} (b);
\draw[style={-,thick,color=black}] (b) edge[in=340,out=60,loop,min distance=20mm] node[below] {} (b);
\end{tikzpicture}
\vspace{-5mm}
\caption{An embedded graph with a nowhere-identity $D_6$-flow but with no nowhere-identity dihedral $3$-flow} 
\end{figure}
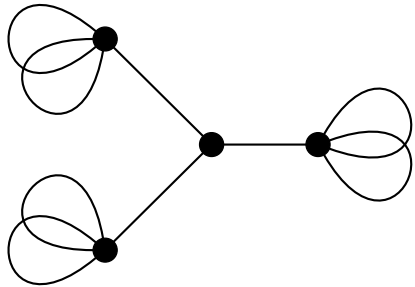
\indent The paper is organized as follows. In Section \ref{section:def} we give definitions and preliminaries that are used throughout the article. Section \ref{section:neg} focusses on counterexamples and obstructions to statement (\ref{equation:main}). Also, the complexity of deciding the existence of nowhere-identity dihedral $2$-flows is addressed. Section \ref{section:pos} considers cubic graphs for which statement (\ref{equation:main}) holds. Special attention is devoted to snarks. Examples are used throughout to illustrate results. Lastly, we discuss material for further research in Section \ref{section:fr}.

\section{Definitions and preliminaries}\label{section:def}

In this section we give definitions and preliminaries. These mostly involve notions of standard flow theory, tailored to the setting of embedded graphs, for which the necessary background may be found in \cite{mohar01}. Furthermore, we set up notation that is used throughout the article. In this paper, all graphs are assumed to be connected, but they are not assumed to be simple.

\subsection{Graph embeddings and flows}\label{subsection:flow}
Let $\Gamma = (V,E)$ be a graph with for each vertex $v$ a cyclic order $\pi_v$ of the edges incident with~$v$. A loop appears twice in the cyclic order. We define the \textit{rotation system} $\pi$ of $\Gamma$ by $\pi := \{\pi_v \mid v \in V\}$. A rotation system of $\Gamma$ is equivalent to a cellular embedding of $\Gamma$ in a closed connected orientable surface $S$ \cite[Thm.\ $3.2.2$]{mohar01}. If $S$ has genus $g$, we will also refer to it as the \textit{torus with }$g$ \textit{holes}, or the $g$\textit{-torus}. The collection of faces of $\Gamma$ (components of $S \setminus \Gamma$ homeomorphic to an open disc in $\R^2$) is denoted by $F$. By Euler's formula 
\begin{equation}\label{equation:euler}
|V|-|E|+|F| = 2-2g.
\end{equation}
From now on, whenever we speak of an embedded graph, we mean a graph that is cellularly embedded in a closed orientable surface.\\
\indent Let $G$ be a group and let $\Gamma = (V,E)$ be a graph with rotation system $\{\pi_v \mid v \in V\}$. Assume that an arbitrary orientation is given to the edges of $\Gamma$. A $G$\textit{-flow} on $\Gamma$ is an assignment of values of $G$ to the edges of $\Gamma$, such that for each vertex $v$ the product of the values on its incident edges (in the order $\pi_v$) equals the identity, where the inverse is taken for any edge leaving $v$. A $G$-flow that on no edge evaluates to the identity, is called a \textit{nowhere-identity} $G$\textit{-flow}. If $\Gamma$ admits a nowhere-identity $G$-flow with respect to some orientation, then it admits such a flow with respect to any orientation, as can be seen by inverting the flow values on the appropriate set of edges.

\section{Negative results}\label{section:neg}

In this section we show that statement (\ref{equation:main}) is false and discuss to what extent it fails. 

\subsection{Counterexamples}\label{subsection:counterexamples}

An important difference between flows on embedded graphs taking values in an abelian group or in a nonabelian group is that for the latter, bridges are not an obstruction to the existence of a flow. Let $G$ be a group with commutator subgroup $[G,G]$, and let $\pi_{\text{ab}}: G \rightarrow G/[G,G]$ denote the canonical homomorphism. The commutator subgroup of $D_{2n}$ is $\langle r^2 \rangle$, with $r$ as in equation (\ref{equation:dihedraal}), as $sr^k(sr^k)^{-1} = r^{2k}$. Thus
\[
|[D_{2n},D_{2n}]| = \left|\left\langle r^2 \right\rangle\right| = \left\{\begin{array}{ll} n/2 & \text{if $n$ is even},\\ n & \text{if $n$ is odd.}\end{array}\right.
\]
Let $\Gamma = (V,E)$ be a graph. For $X \subseteq V$, let $\delta(X)$ be the set of edges with one end in $X$ and the other in $V\setminus X$.

\begin{lemma}\label{lemma:commsubgroup}
Let $\Gamma = (V,E)$ be an embedded graph and $f: E \rightarrow G$ a $G$-flow of $\Gamma$. Then, for every $X \subseteq V$ we have that 
\begin{equation}\label{equation:incomm}
\prod_{e \in \delta(X)}f(e) \in [G,G],
\end{equation}
for any order of the edges $e \in \delta(X)$ in the product above.
\end{lemma}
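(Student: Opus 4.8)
The plan is to push everything into the abelianization $G^{\text{ab}} := G/[G,G]$, where Kirchhoff's law becomes linear and the rotation system plays no role. Composing $f$ with the canonical homomorphism $\pi_{\text{ab}}\colon G \to G^{\text{ab}}$ gives $\bar f := \pi_{\text{ab}} \circ f \colon E \to G^{\text{ab}}$. Since $\pi_{\text{ab}}$ is a homomorphism into an abelian group, for any ordering of $\delta(X)$ we have $\pi_{\text{ab}}\bigl(\prod_{e \in \delta(X)} f(e)\bigr) = \sum_{e \in \delta(X)} \bar f(e)$, and this sum is manifestly independent of the order. Hence the ``for any order'' clause is automatic, and it suffices to prove that $\sum_{e \in \delta(X)} \bar f(e) = 0$ in $G^{\text{ab}}$.

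First I would translate the flow axioms through $\pi_{\text{ab}}$. Fix the orientation of $\Gamma$ carried by $f$. For a vertex $v$, the defining relation $\prod_{e \ni v} f(e)^{\pm 1} = 1$ (product taken over the edge-ends at $v$ in the cyclic order $\pi_v$, with exponent $-1$ on edges leaving $v$) maps to
\[
\sum_{e} \sigma_v(e)\,\bar f(e) = 0,
\]
the sum running over the edge-ends at $v$ (a loop contributing two ends), where $\sigma_v(e) = +1$ if the end is the head of $e$ and $-1$ if it is the tail. Commutativity of $G^{\text{ab}}$ both removes the dependence on $\pi_v$ and makes a loop at $v$ contribute $\bar f(e) - \bar f(e) = 0$.

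Next I would sum these relations over all $v \in X$. An edge with both ends in $X$ — a loop based in $X$ included — occurs once with coefficient $+1$ and once with $-1$, hence contributes $0$; an edge with no end in $X$ does not occur; an edge of $\delta(X)$ occurs exactly once, with sign determined by which end lies in $X$. After reorienting every edge of $\delta(X)$ to leave $X$ and adjusting $f$ accordingly (harmless by the reorientation principle recalled in Section~\ref{subsection:flow}), all these signs coincide, and the summed identity reduces to $\sum_{e \in \delta(X)} \bar f(e) = 0$, which is the claim.

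The proof is short, so the only thing that needs care is this final bookkeeping: one must verify that internal edges genuinely cancel in pairs — loops included, via the two-ends convention — and that the cut edges are oriented uniformly, since with mixed orientations the argument would only yield that some product of the $f(e)^{\pm 1}$ lies in $[G,G]$ rather than $\prod_{e\in\delta(X)} f(e)$ itself. Beyond that, the lemma is a direct consequence of applying $\pi_{\text{ab}}$ to Kirchhoff's law and adding up over $X$.
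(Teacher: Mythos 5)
Your proof is correct and takes essentially the same approach as the paper: compose $f$ with $\pi_{\text{ab}}$ and use that the resulting abelian flow vanishes on the cut $\delta(X)$, so the product lies in $\ker\pi_{\text{ab}}=[G,G]$; the only difference is that you derive the cut-set identity by summing the vertex relations over $X$, whereas the paper simply cites it from Diestel. Your closing caveat about orienting the cut edges uniformly is a genuine point of care that the paper's one-line proof glosses over, since with mixed orientations one only obtains the signed product $\prod_{e\in\delta(X)}f(e)^{\pm 1}$ in $[G,G]$, which is evidently the reading the statement intends.
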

\begin{proof}
Consider the flow with values in the abelian group $G/[G,G]$ given by the composition $\pi_{\text{ab}} \circ f$. As abelian flows are zero on cut-sets \cite[Prop.\ $6.1.1$]{diestel00}, the expression on the left-hand side of (\ref{equation:incomm}) lies in the kernel of $\pi_{\text{ab}}$, which equals $[G,G]$.
\end{proof}

\noindent Lemma \ref{lemma:commsubgroup} shows that embedded graphs with bridges may have nowhere-identity $G$-flows, provided that $G$ is nonabelian. Indeed, consider the following embedded graph $\Gamma = (V,E)$ whose vertices are given a clockwise cyclic order:
\vspace{-6mm}
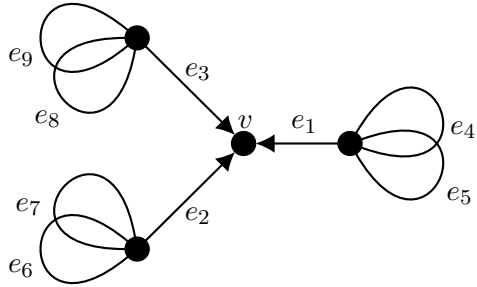
\begin{figure}[H]
\centering
\begin{tikzpicture}
\SetGraphUnit{1.4}
\Vertex{a}
\EA(a){b}
\NOWE(a){c}
\SOWE(a){d}
\draw[style={<-,thick,color=black}] (a)--(b)node[pos=0.5,anchor=south]{\hspace{-5mm}$v$\hspace{5mm}$e_1$};
\draw[style={<-,thick,color=black}] (a)--(c)node[pos=0.5,anchor=south]{\hspace{2mm}$e_3$};
\draw[style={<-,thick,color=black}] (a)--(d)node[pos=0.5,anchor=north]{\hspace{2mm}$e_2$};
\draw[style={-,thick,color=black}] (c) edge[in=180,out=260,loop,min distance=20mm] node[below] {\vspace{5mm}\hspace{-4mm}$e_8$} (c);
\draw[style={-,thick,color=black}] (c) edge[in=140,out=220,loop,min distance=20mm] node[below] {\vspace{-5mm}\hspace{-5mm}$e_9$} (c);
\draw[style={-,thick,color=black}] (d) edge[in=100,out=180,loop,min distance=20mm] node[below] {\vspace{5mm}\hspace{-9mm}$e_7$} (d);
\draw[style={-,thick,color=black}] (d) edge[in=140,out=220,loop,min distance=20mm] node[below] {\vspace{-5mm}\hspace{-5mm}$e_6$} (d);
\draw[style={-,thick,color=black}] (b) edge[in=300,out=20,loop,min distance=20mm] node[below] {\vspace{5mm}\hspace{5mm}$e_{5}$} (b);
\draw[style={-,thick,color=black}] (b) edge[in=340,out=60,loop,min distance=20mm] node[below] {\vspace{-5mm}\hspace{6mm}$e_{4}$} (b);
\end{tikzpicture}
\vspace{-5mm}
\caption{$\Gamma$: a counterexample to statement (\ref{equation:main})} \label{fig:M1}
\end{figure}
\vspace{-1mm}
\noindent Note that the graph $\Gamma$ with this vertex rotation system has precisely one face and is therefore an embedding of $\Gamma$ in the $3$-torus. By Lemma \ref{lemma:commsubgroup}, any nowhere-identity dihedral $3$-flow has to assign a non-identity value from 
\begin{equation}\label{equation:intersection}
[\D,\D] \cap \D^{<3} = \left\{\begin{pmatrix} 1 & 0 \\ 0 & 1 \end{pmatrix},\begin{pmatrix} 1 & \pm 2 \\ 0 & 1 \end{pmatrix}\right\},
\end{equation}
to the edges $e_1,e_2$ and $e_3$. But then clearly Kirchhoff's law cannot be satisfied at the vertex $v$. Hence, $\Gamma$ with this embedding does not have a nowhere-identity dihedral $3$-flow.\\
\indent Let $f: E \rightarrow D_6$ be given by
\begin{equation}\label{equation:flowcounterexa}
f(e) = \left\{\begin{array}{ll} r^2 & \text{if $e \in \{e_1,e_2,e_3\}$},\\ r^2s & \text{if $e \in \{e_4,e_6,e_8\}$},\\ rs & \text{if $e \in \{e_5,e_7,e_9\}$},\end{array}\right.
\end{equation}
with $r$ and $s$ as in equation (\ref{equation:dihedraal}). Then $f$ is a nowhere-identity $D_6$-flow with respect to the orientation as given in Figure \ref{fig:M1}. Observe that the loops are assigned reflections and hence do not require an orientation. We conclude that statement (\ref{equation:main}) does not hold for $n=3$.\\
\indent We will now generalize the counterexample in Figure \ref{fig:M1}. Call a bridge of an embedded graph $\Gamma$ \textit{plane-sided} if its removal results in an embedded graph at least one of whose components has genus zero (the embedding of components being inherited from that of $\Gamma$). The next lemma shows that plane-sided bridges form an obstruction for the existence of nowhere-identity $G$-flows on an embedded graph.

\begin{lemma}\label{lemma:planesided}
If an embedded graph has a plane-sided bridge, then it does not have a nowhere-identity $G$-flow, for any group $G$.
\end{lemma}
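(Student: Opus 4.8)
The plan is to argue by contradiction: assuming a nowhere-identity $G$-flow exists, reduce step by step to a smaller embedded graph that still has a plane-sided bridge and still carries such a flow, the reduction being controlled by induction on the number of edges on the plane side of the bridge. Concretely, let $e$ be the plane-sided bridge, let $\Gamma_1$ be a component of $\Gamma\setminus e$ of genus zero in its inherited embedding, and let $v$ be the endpoint of $e$ lying in $\Gamma_1$. Suppose $f$ is a nowhere-identity $G$-flow on $\Gamma$, and induct on $|E(\Gamma_1)|$, the induction ranging over all pairs consisting of an embedded graph and a plane-sided bridge of it. In the base case $E(\Gamma_1)=\emptyset$, the vertex $v$ has degree $1$ in $\Gamma$, so Kirchhoff's law at $v$ reads $f(e)^{\pm 1}=1$ and forces $f(e)=1$, contradicting that $f$ is nowhere-identity.

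For the inductive step, assume $\Gamma_1$ has an edge, and distinguish two cases. If $\Gamma_1$ has a non-loop edge $e_0$, contract it. Then $\Gamma/e_0$ is again cellularly embedded in the same surface; the edge $e$ is still a plane-sided bridge, since $(\Gamma/e_0)\setminus e$ has $\Gamma_1/e_0$ as a component and contracting a non-loop edge leaves the Euler characteristic, hence the genus, unchanged; and the restriction of $f$ to $E(\Gamma)\setminus\{e_0\}$ is still a $G$-flow, because at the vertex obtained by identifying the two ends of $e_0$ the two defining Kirchhoff products concatenate, with the two occurrences of $f(e_0)^{\pm 1}$ becoming adjacent and carrying opposite exponents, so that they cancel. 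This restricted flow is nowhere-identity and $|E(\Gamma_1/e_0)|<|E(\Gamma_1)|$, contradicting the induction hypothesis. If instead every edge of $\Gamma_1$ is a loop at $v$, then the one-vertex embedding of $\Gamma_1$ in the sphere has, by $|V|-|E|+|F|=2$, exactly $|E(\Gamma_1)|+1$ faces, while $v$ carries only $2|E(\Gamma_1)|$ corners; since every corner belongs to a face, some face must be a monogon, that is, some loop $\ell$ has its two ends consecutive in $\pi_v$. Deleting $\ell$ keeps the embedding cellular with the same Euler characteristic (the monogon merges with the face on its other side), keeps $e$ a plane-sided bridge, and leaves $f$ restricted to $E(\Gamma)\setminus\{\ell\}$ a $G$-flow, since the contribution of $\ell$ to Kirchhoff's law at $v$ is a consecutive factor $f(\ell)^{\epsilon}f(\ell)^{-\epsilon}=1$. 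Again this gives a smaller nowhere-identity flow, a contradiction, completing the induction.

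The verifications that contraction of a non-loop edge and deletion of a monogon loop preserve the three properties in play — being a $G$-flow, being cellularly embedded, and having $e$ as a plane-sided bridge — are routine manipulations of embedded graphs, and in each reduction the flow condition survives precisely because the removed edge contributes to some Kirchhoff product as two adjacent, mutually inverse factors. The one genuinely topological ingredient, and the conceptual heart of the lemma, is the all-loops case: a one-vertex graph cellularly embedded in the sphere must contain a loop whose two ends are consecutive in the rotation at its vertex, which the Euler-characteristic count above supplies. Equivalently, it is exactly the planarity of $\Gamma_1$ that forces the value $f(e)$ on the bridge — which by Lemma \ref{lemma:commsubgroup} already lies in $[G,G]$ — to be trivial, whereas a positive-genus side, as in the counterexample of Figure \ref{fig:M1}, can contribute a nontrivial commutator there and thereby sustain a nowhere-identity flow.
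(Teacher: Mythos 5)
Your overall strategy is the same as the paper's: reduce the genus-zero side of the bridge by edge contractions until only loops at $v$ remain, and then show that planarity forces the Kirchhoff product at $v$ to collapse to $f(e)=1$. The paper does this in one step (contract $\Gamma^{\circ}$ down to a single vertex carrying $e$ and a collection of non-intersecting loops, whose contributions telescope); you do it by induction, peeling off one monogon loop at a time. Your base case and your contraction step (including the verification that the two occurrences of $f(e_0)^{\pm1}$ become adjacent with opposite exponents after concatenating the two rotations) are correct.

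There is, however, one genuine though easily repairable gap in the all-loops case. Your Euler count is carried out for the embedding of $\Gamma_1$ \emph{alone} in the sphere, so the monogon it produces is a face of $\Gamma_1$: the two ends of $\ell$ are consecutive in the rotation at $v$ restricted to $E(\Gamma_1)$. But the cancellation $f(\ell)^{\epsilon}f(\ell)^{-\epsilon}=1$ that you invoke must take place inside the Kirchhoff product at $v$ \emph{in $\Gamma$}, whose cyclic order also contains the end of the bridge $e$. If that end happens to sit in precisely the corner forming your monogon, the product at $v$ reads $\cdots f(\ell)^{\epsilon}f(e)^{\pm1}f(\ell)^{-\epsilon}\cdots$, and deleting $\ell$ changes the product by a conjugation, so the restricted map need not remain a flow. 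The fix is one additional line: with $m=|E(\Gamma_1)|\geq 1$ loops, $\Gamma_1$ has $m+1$ faces whose boundary lengths sum to $2m$, whence at least \emph{two} of them are monogons ($2m \geq k + 2(m+1-k)$ forces $k\geq 2$); since the end of $e$ occupies only one corner of the rotation at $v$, at least one monogon corner survives in the full rotation $\pi_v$ of $\Gamma$, and that loop can be deleted exactly as you describe. With this amendment the induction closes and the proof is correct.
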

\begin{proof}
Let $e = uv$ be a plane-sided bridge of an embedded graph $\Gamma = (V,E)$, with $u,v \in V$. Let $\Gamma^{\circ}$ denote a component of $\Gamma\setminus e$ of genus zero. Assume that $\Gamma^{\circ}$ contains $v$. Then we can modify $\Gamma$ by contracting edges of $\Gamma^{\circ}$ so that it results in an embedded graph $\Gamma'$ in which the edges incident with $v$ are $e$ and a (possibly empty) collection of non-intersecting loops. Any nowhere-identity $G$-flow in $\Gamma$ gives rise to a nowhere-identity $G$-flow in $\Gamma'$ and vice versa. But in $\Gamma'$ any $G$-flow must assign the identity to $e$, proving the lemma.
\end{proof}

\noindent In fact, the following theorem of DeVos \cite{devos00} (but taken as it is stated in \cite{goodall16}) shows that if the commutator subgroup of $G$ is large enough, then a plane-sided bridge is the only obstruction. In the theorem, $Q_8$ denotes the quaternion group. 

\begin{theorem}\label{theorem:devos}
Let $\Gamma$ be an embedded graph and $G$ a finite nonabelian group.
\begin{enumerate}
\item If $|[G,G]| > 2$, then $\Gamma$ admits a nowhere-identity $G$-flow if and only if $\Gamma$ does not have a plane-sided bridge.
\item If $|[G,G]| = 2$ and $G \notin \{D_8,Q_8\}$, then $\Gamma$ has a nowhere-identity $G$-flow if and only if there does not exist an odd-sized set $B$ of bridges of $\Gamma$ such that each $e \in B$ is a plane-sided bridge of $\Gamma \setminus (B\setminus e)$.
\item If $G \in \{D_8, Q_8\}$, then $\Gamma$ has a nowhere-identity $G$-flow in case $\Gamma$ has no bridge. It is \text{NP}-complete to decide if $\Gamma$ has a nowhere-identity $G$-flow if there is a bridge. 
\end{enumerate}
\end{theorem}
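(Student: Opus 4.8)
The plan is to separate the statement into a necessity part and a sufficiency part, and to attack sufficiency by reducing along bridges to the $2$-edge-connected case.

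\textbf{Necessity.} For item~1 this is precisely Lemma~\ref{lemma:planesided}. For items~2 and~3 I would refine the cut-set argument of Lemma~\ref{lemma:commsubgroup}: applying \eqref{equation:incomm} to one side of a bridge $e$ shows $f(e) \in [G,G]$, so when $|[G,G]| = 2$ every bridge must carry the unique non-identity (hence central) element $z$ of $[G,G]$. Given a set $B = \{e_1,\dots,e_{2k+1}\}$ of bridges with $e_i$ plane-sided in $\Gamma \setminus (B\setminus e_i)$, I would contract the genus-zero sides as in the proof of Lemma~\ref{lemma:planesided} to reduce $\Gamma$ to a small embedded graph all of whose surviving bridge values are forced to equal $z$; a local computation at the surviving vertex then produces an equation of the form $z^{2k+1} = z \neq 1$, contradicting Kirchhoff's law. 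Since no homomorphism $G \to \Z_2$ detects $z$, this step has to be genuinely local rather than an abelianization argument.

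\textbf{Sufficiency.} I would decompose $\Gamma$ along its bridges into a tree of blocks, each a maximal $2$-edge-connected subgraph carrying the induced embedding; on each bridge $f$ must take a non-identity value of $[G,G]$ by Lemma~\ref{lemma:commsubgroup}. The heart of the matter is the claim that \emph{every $2$-edge-connected embedded graph admits a nowhere-identity $G$-flow, and moreover the value on a single prescribed pendant half-edge may be set to any element of $[G,G]$.} I would prove this by induction on the number of edges: the base case is a bouquet of $g \geq 1$ loops, where after normalizing the rotation Kirchhoff's law becomes a surface word equation $\prod_{i=1}^g [a_i,b_i] = c$ to be solved with all $a_i,b_i$ non-identity, which is possible for nonabelian $G$ (pad with trivial commutators $[x,x] = 1$ and correct with one genuinely noncommuting pair); the inductive step contracts or deletes a non-loop edge — with the appropriate local surgery on the rotation system to preserve $2$-edge-connectivity — and then re-extends the flow across the restored vertex, again a local word-equation solvability statement.

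\textbf{Assembling the cases.} Granting the claim, item~1 follows because when $|[G,G]| > 2$ each bridge has at least two non-identity candidate values, which is enough freedom to glue the block flows consistently along the tree of bridges; the only way to fail is then a genus-zero block forcing a bridge value to be the identity, i.e.\ a plane-sided bridge. When $|[G,G]| = 2$ each bridge is forced to carry $z$, and the residual global constraint is exactly the parity condition on plane-sided bridge sets isolated in the necessity argument, which for $G \notin \{D_8,Q_8\}$ I expect to be the only obstruction (item~2). For the exceptional groups $D_8$ and $Q_8$ the gluing is too constrained to be captured by a single parity, and here I would prove item~3 by a hardness reduction, designing bridge-gadgets whose admissible flow values encode Boolean variables and clauses and reducing a known NP-complete problem to the existence of a nowhere-identity $G$-flow. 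The points I expect to be genuinely hard are the precise reason $D_8$ and $Q_8$ differ from every other group with $|[G,G]| = 2$, and the gadget construction and correctness proof behind the NP-completeness in item~3; by comparison the $2$-edge-connected claim and its word-equation base case should be routine once the inductive framework is set up.
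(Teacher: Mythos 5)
The first thing to note is that the paper does not prove this statement at all: it is quoted (via \cite{goodall16}) from DeVos's thesis \cite{devos00} and used as a black box, so there is no in-paper proof to measure your attempt against. Your necessity arguments are essentially sound and coincide with what the paper itself extracts from the theorem in the discussion preceding Theorem~\ref{theorem:obstrd6}: a bridge value lies in $[G,G]$ by Lemma~\ref{lemma:commsubgroup}, the unique non-identity element $z$ of an order-two commutator subgroup is central, and contracting the genus-zero sides reduces the odd set $B$ to a single vertex with non-crossing loops whose contributions telescope away, leaving $z^{2k+1}=z\neq 1$ against Kirchhoff's law.

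The sufficiency half, however, is a plan rather than a proof, and the gaps sit exactly at the load-bearing points. (i) Your base case assumes the vertex word of a bouquet can be taken to be $\prod_i[a_i,b_i]=c$; an arbitrary rotation system gives an arbitrary word in which each letter occurs twice with opposite exponents, and the free-group automorphism normalizing it to canonical form does not preserve the ``all letters non-identity'' constraint, so solvability has to be argued for the actual word. (ii) The ``moreover'' clause --- that a pendant stub can be forced to equal \emph{any} element of $[G,G]$ --- is stronger than what a genus-$h$ block can deliver: it requires the target to be a product of at most $h$ commutators of non-identity elements, and it is precisely the failure of such local solvability for $D_8$ and $Q_8$ (where $[a,b]$ factors through $G/Z(G)\cong\Z_2\times\Z_2$ and is therefore extremely constrained) that creates case 3. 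You explicitly defer this point, but without it cases 2 and 3 are not separated and the statement of your key claim is false as written for the exceptional groups. (iii) The inductive step ``re-extend the flow across the restored vertex'' is exactly where the new edge can be forced to carry the identity; repairing the flow when this happens is the substance of DeVos's argument, not a routine word-equation observation. (iv) The NP-completeness in item 3 is asserted with no reduction. So the proposal identifies a plausible skeleton but does not constitute a proof; the honest options are to cite \cite{devos00} as the paper does, or to carry out (i)--(iv) in full.
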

\indent Suppose that a graph $\Gamma = (V,E)$ with rotation system $\pi$ has a vertex $v$ of degree $d$, with $d$ odd, and such that $\Gamma\setminus v$ has $d$ connected components. Then there is no way of assigning the non-identity elements of the set in equation (\ref{equation:intersection}) to the $d$ edges incident with $v$, such that the product (in any order) equals the identity. In fact, to $v$ we can add any number of non-intersecting loops and we can decontract edges (just as in the proof of Lemma \ref{lemma:planesided}). The existence of a vertex $v$ as described above, is then seen to correspond to the existence of a subset $X \subseteq V$ such that $\delta(X)$ consists entirely of bridges, is odd-sized and such that the subgraph induced by $X$ is planar with respect to $\pi$. Such a $\delta(X)$ corresponds bijectively with a set $B$ as described in the second part of Theorem \ref{theorem:devos}. Combining the above with the first two items of Theorem \ref{theorem:devos}, the following is derived. 

\begin{theorem}\label{theorem:obstrd6}
Let $\Gamma$ be an embedded graph. Then $\Gamma$ has a nowhere-identity $D_6$-flow but no nowhere-identity dihedral $3$-flow if $\Gamma$ has no plane-sided bridge but there exists an odd-sized subset $B$ of bridges of $\Gamma$ such that each $e \in B$ is a plane-sided bridge of $\Gamma \setminus (B \setminus e)$. 
\end{theorem}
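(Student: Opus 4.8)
The plan is to combine Theorem~\ref{theorem:devos} with the reduction already sketched in the paragraph preceding the statement. The hypothesis splits naturally into two halves: first, $\Gamma$ has no plane-sided bridge; second, there is an odd-sized set $B$ of bridges such that each $e \in B$ is plane-sided in $\Gamma \setminus (B \setminus e)$. For the first conclusion — that $\Gamma$ admits a nowhere-identity $D_6$-flow — I would simply invoke part~$1$ of Theorem~\ref{theorem:devos}: since $|[D_6,D_6]| = |\langle r^2 \rangle| = 3 > 2$ (using the computation $n=3$ odd in the displayed formula for $|[D_{2n},D_{2n}]|$), the group $D_6$ has commutator subgroup of size exceeding $2$, so the existence of a nowhere-identity $D_6$-flow is equivalent to the absence of a plane-sided bridge, which is exactly the first half of the hypothesis.

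For the second conclusion — that $\Gamma$ has \emph{no} nowhere-identity dihedral $3$-flow — I would argue as follows. Suppose for contradiction that $f : E \to \D^{<3}$ is a nowhere-identity dihedral $3$-flow. Apply Lemma~\ref{lemma:commsubgroup} with $G = \D$: for the set $X \subseteq V$ induced by the odd-sized bridge set $B$ (more precisely, choose $X$ so that $\delta(X) = B$; such an $X$ exists because each $e \in B$ is a bridge of $\Gamma \setminus (B \setminus e)$, so $B$ is itself an edge cut, as noted in the text), the product $\prod_{e \in \delta(X)} f(e)$ lies in $[\D,\D]$. Since $f$ takes values in $\D^{<3}$, each $f(e)$ for $e \in B$ lies in $[\D,\D] \cap \D^{<3}$ as soon as we know each individual value is a commutator-subgroup element — but that is not immediate, so instead I would iterate: for each $e \in B$, the set $B \setminus (B \setminus e) = \{e\}$... no. The cleaner route, and the one I would actually take, is the vertex reduction already described: decontract within each plane-sided side to concentrate the cut at a single odd-degree vertex $v$ whose removal disconnects $\Gamma$ into $d = |B|$ planar pieces, exactly as in the proof of Lemma~\ref{lemma:planesided}. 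A nowhere-identity dihedral $3$-flow on $\Gamma$ would pull back to one on the modified graph $\Gamma'$, and on $\Gamma'$ each of the $d$ bridges at $v$ must carry a value in $[\D,\D] \cap \D^{<3}$ — by Lemma~\ref{lemma:commsubgroup} applied to the planar (hence bridge-only-cut, abelian-flow-forces-zero) side — which by equation~(\ref{equation:intersection}) means each such value is $\begin{psmallmatrix} 1 & \pm 2 \\ 0 & 1 \end{psmallmatrix}$ (the identity being excluded). But then the product at $v$ of $d$ copies of matrices $\begin{psmallmatrix} 1 & \pm 2 \\ 0 & 1 \end{psmallmatrix}$ has upper-right entry an odd multiple of $2$ plus contributions of the same parity, never zero mod... in fact over $\Z$ the sum $\pm 2 \pm 2 \pm \cdots \pm 2$ ($d$ terms, $d$ odd) is $2$ times an odd number, hence nonzero, so Kirchhoff's law fails at $v$. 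This contradiction establishes the claim.

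The main obstacle is the bookkeeping in the reduction step: verifying that the decontraction-and-loop-addition procedure of Lemma~\ref{lemma:planesided}, applied simultaneously across all $d$ plane-sided sides determined by $B$, genuinely produces an embedded graph $\Gamma'$ with a single vertex $v$ of odd degree $d$ whose deletion yields $d$ planar components, \emph{and} that nowhere-identity flows transfer faithfully in both directions under this surgery. Both facts are essentially contained in the discussion already in the paper (the correspondence between such a $v$ and a set $B$ as in Theorem~\ref{theorem:devos}(2), and the flow-preservation noted in Lemma~\ref{lemma:planesided}'s proof), so the argument is really an assembly of pieces rather than anything new; I would state it concisely, citing those passages, and devote the only genuine computation to the parity argument ruling out $\sum_{i=1}^{d} \pm 2 = 0$ for odd $d$.
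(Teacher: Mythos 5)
Your proposal is correct and follows essentially the same route as the paper: part 1 of Theorem~\ref{theorem:devos} (with $|[D_6,D_6]|=3>2$) gives the nowhere-identity $D_6$-flow, and the contraction of the planar side to a single odd-degree vertex, combined with the fact that each bridge must carry a value from $[\D,\D]\cap\D^{<3}\setminus\{1\}$ and that an odd signed sum $\sum\pm2$ cannot vanish, rules out a nowhere-identity dihedral $3$-flow. This is precisely the argument the paper gives in the paragraph preceding the theorem, so aside from the exploratory false start in your middle paragraph there is nothing to add.
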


\noindent As $[\D,\D] \cap \D^{<3} = [\D,\D] \cap \D^{<4}$, the presence of a set of bridges as in Theorem \ref{theorem:obstrd6} also is an obstruction for the existence of nowhere-identity dihedral $4$-flows. 

\subsection{The case $n=2$}\label{subsection:nis2}

The dihedral group $D_{2n}$ is usually defined for $n \geq 3$. However, the description of $D_{2n}$ in terms of the group presentation as given just above equation (\ref{equation:dihedraal}), as well as the definitions of $\D^{<n}$ and $\Pi_n$ and the statement in (\ref{equation:main}), allow for an investigation for the case $n=2$. As $D_4 \cong V_4$, the Klein four-group, Theorem \ref{theorem:abelian} shows that the existence of a nowhere-identity $D_4$-flow is equivalent with the existence of a nowhere-zero $4$-flow. In this section we will restrict attention to cubic graphs. For cubic graphs, specifying a nowhere-zero $4$-flow is the same as specifying a $3$-edge coloring \cite[Prop.\ $6.4.5(\text{ii})$]{diestel00}. The following example shows that not all cubic graphs with a $3$-edge coloring also have a nowhere-identity dihedral $2$-flow (with respect to a given embedding).

\begin{examp}\label{examp:2punt}
Consider the (unique) embedding in the torus of the cubic graph $\Gamma = (V,E)$ on two vertices with three parallel edges. Write $V = \{v_1,v_2\}$ and $E = \{e_1, e_2,e_3\}$. Then the rotation system is given by $\pi_{v_1} = \pi_{v_2} = (e_1e_2e_3)$. Let $f$ be a nowhere-identity dihedral $2$-flow. Then there must be a unique rotation, say $f(e_1)$. The flow equations read $f(e_1)f(e_2)f(e_3) = f(e_1)^{-1}f(e_2)f(e_3) = 1$ (as reflections equal their own inverse). Hence $f(e_1)^2 = 1$, the identity of $\D^{<2}$. This implies that $f(e_1) = 1$, contradicting the fact that $f$ is nowhere-identity.
\end{examp}

\indent We will characterize when an embedded cubic graph admits a nowhere-identity dihedral $2$-flow. Let us first consider a nowhere-identity $D_{2n}$-flow $f$ on an embedded graph $\Gamma = (V,E)$ (not necessarily cubic). For every $v \in V$, the number of edges incident with $v$ on which $f$ evaluates to a reflection is even. Assume now that $\Gamma$ is cubic. For each $0 \leq a < n$, if at any vertex one of the edges incident with it is assigned a reflection of the form $r^as$, with $r$ and $s$ as in equation (\ref{equation:dihedraal}), then this is the only such edge. For $0 \leq a < n$, define
\[
M_a := \{e \in E \mid f(e) = r^as\}.
\]
\noindent We summarize the above observations in a lemma.

\begin{lemma}\label{lemma:regularmatching}
Let $f$ be a nowhere-identity $D_{2n}$-flow on an embedded cubic graph. Then the subgraph consisting of edges that are reflections is $2$-regular. For each $0 \leq a < n$, the set $M_a$ is a matching.
\end{lemma}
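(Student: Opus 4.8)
The plan is to prove the two assertions of Lemma \ref{lemma:regularmatching} in turn, starting from the elementary parity observation already recorded in the discussion preceding the statement, namely that at every vertex $v$ of an embedded graph carrying a nowhere-identity $D_{2n}$-flow $f$, the number of incident edges on which $f$ takes a reflection value is even. I would first justify that parity fact cleanly: apply $\pi_{\mathrm{ab}}\colon D_{2n}\to D_{2n}/[D_{2n},D_{2n}]$ to the flow at $v$. Since $D_{2n}/[D_{2n},D_{2n}]$ is abelian (it is $\Z_2$ when $n$ is odd and $\Z_2\times\Z_2$ when $n$ is even), the image flow is an honest abelian flow, and in either case $\pi_{\mathrm{ab}}$ sends every rotation to the identity and every reflection to the unique nontrivial involution in the first coordinate; Kirchhoff's law at $v$ in the quotient then forces the number of reflection-edges at $v$ to be even. (Here one must be mildly careful that inverses don't spoil the count, but reflections are involutions and rotations map to the identity of the relevant $\Z_2$-component, so inversion changes nothing.)

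For the first assertion, specialize to a cubic graph $\Gamma$. At each vertex the three incident edges carry flow values whose ordered product is the identity; if all three were rotations the product would lie in $\langle r\rangle$, which is fine, so rotations-only vertices are possible, but I claim a vertex cannot have exactly one or exactly three reflection-edges — the parity fact rules out one and three, leaving only zero or two reflection-edges at each vertex. Hence the spanning subgraph $R$ of $\Gamma$ on the edge set $\{e : f(e)\text{ is a reflection}\}$ has every vertex of degree $0$ or $2$; to upgrade "degree $0$ or $2$" to "$2$-regular" I would argue that a vertex of $R$-degree $0$ is impossible: its three edges would all be rotations, say with values $r^{a},r^{b},r^{c}$ (signs absorbed by orientation), forcing $r^{a+b+c}=1$, which is consistent — so in fact degree $0$ \emph{can} occur unless one invokes more. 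Re-examining, I think the intended reading is that the reflection-subgraph is $2$-regular \emph{on the vertices it meets}, i.e. a disjoint union of cycles; alternatively, if the claim is genuinely global $2$-regularity one needs the extra input that a nowhere-identity $D_{2n}$-flow forces at least two reflection-edges at every vertex, which would follow from noting that a rotations-only vertex together with the flow condition behaves like an abelian $\Z_n$-flow locally and these glue to a $\Z_n$-flow whose nowhere-identity-ness contradicts something — but absent that, the safe statement to prove is: $R$ is a disjoint union of cycles, equivalently $2$-regular as a subgraph (isolated vertices excluded). I would state and prove exactly that, matching the phrase "the subgraph consisting of edges that are reflections is $2$-regular."

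For the second assertion, fix $a$ with $0\le a<n$ and consider $M_a=\{e : f(e)=r^{a}s\}$. I must show no two edges of $M_a$ share a vertex. Suppose $v$ is incident with two edges $e,e'\in M_a$. By the first part, $v$ has exactly two reflection-edges, so these are precisely $e$ and $e'$, and the third edge $e''$ at $v$ is a rotation $r^{b}$ (up to sign). Writing Kirchhoff's law at $v$ in the cyclic order $\pi_v$, the ordered product of $r^{a}s$, $r^{a}s$, and $r^{\pm b}$ (in some cyclic arrangement, with one inverse if $e''$ leaves $v$) equals the identity. Now $(r^{a}s)(r^{a}s)=1$ because every reflection is an involution, so the product collapses to $r^{\pm b}=1$, i.e. $e''$ is also assigned the identity, contradicting that $f$ is nowhere-identity. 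One subtlety is the cyclic position of $e''$ relative to $e,e'$: if $e''$ is "between" $e$ and $e'$ the product reads $r^{a}s\cdot r^{\pm b}\cdot r^{a}s = r^{a}(s r^{\pm b} s) r^{a} = r^{a} r^{\mp b} r^{a} = r^{2a\mp b}$, which must be $1$, so $b\equiv \pm 2a \pmod n$; this does \emph{not} immediately contradict anything, so I would need to also use that $f(e'')\ne 1$ directly — but $r^{2a\mp b}=1$ is compatible with $r^{\pm b}=r^{2a}\ne 1$, so this case is not obviously contradictory and seems to be the main obstacle.

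The main obstacle, then, is handling the case where the rotation-edge at $v$ sits cyclically between the two reflection-edges of $M_a$: there the naive involution cancellation does not apply. I expect the resolution is that the claimed conclusion "$M_a$ is a matching" should be combined with the observation that at a cubic vertex the cyclic order of the three edges is immaterial up to relabelling which pairs are "adjacent", so WLOG the two reflection-edges are cyclically consecutive, reducing to the clean case $(r^{a}s)(r^{a}s)r^{\pm b}=r^{\pm b}=1$ — but since all three cyclic orderings of three objects are equivalent under rotation (cyclic orders on $3$ elements are unique up to orientation), one genuinely cannot assume the two $M_a$-edges are consecutive; the two distinct cyclic orders are $(e\,e'\,e'')$ and $(e\,e''\,e')$, and in both $e''$ is adjacent to both $e$ and $e'$. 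So the product is always, after cyclic rotation, of the form $r^{a}s\cdot r^{a}s\cdot r^{\pm b}$ or $r^{a}s\cdot r^{\pm b}\cdot r^{a}s$, and I must rule out the latter. I would do so by additionally orienting: choose the orientation (permissible by the last sentence of Section \ref{subsection:flow}) so that $e''$ leaves $v$; then in the product $r^{a}s\cdot r^{-(\pm b)}\cdot r^{a}s$ I recompute, and in fact I suspect the correct argument is that one should instead read off Kirchhoff at the \emph{other} endpoint of $e$ or $e'$, or observe that $r^{2a}s\cdot(\text{something})$ forces $f(e'')$ to be a \emph{reflection}, contradicting that the reflection-edges at $v$ are only $e,e'$. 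Pinning down this last step — exploiting the cyclic order together with a well-chosen orientation to force a contradiction in the "sandwiched" case — is where the real work lies, and I would spend most of the write-up there, treating the parity fact and the reduction to two reflection-edges per vertex as the quick preliminaries.
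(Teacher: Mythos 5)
Your overall strategy — parity of reflection-incidences at each vertex, hence degree $0$ or $2$ in the reflection subgraph, then a local computation showing the two reflections at a vertex cannot both equal $r^as$ — is exactly the argument the paper intends: the lemma is stated as a summary of the two observations preceding it and is given no further proof. Your reading of ``$2$-regular'' is also the right one; the subgraph consisting of a set of edges has as its vertices only the endpoints of those edges, so isolated vertices do not arise, and the paper's very next sentence (a flow may consist entirely of rotations) confirms that no stronger global statement is meant. You spend more space than necessary second-guessing this.

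The genuine gap is in the second assertion, and it stems from an algebra slip rather than a missing idea. You regroup $r^{a}s\cdot r^{\pm b}\cdot r^{a}s$ as $r^{a}\bigl(s r^{\pm b} s\bigr)r^{a}$, which silently replaces the final factor $r^{a}s$ by $sr^{a}$; these differ, since $sr^{a}=r^{-a}s$. The correct computation of the ``sandwiched'' case is
\[
(r^{a}s)\,r^{\pm b}\,(r^{a}s)\;=\;r^{a}\,(r^{\mp b}s)\,(r^{a}s)\;=\;r^{a\mp b}\,(r^{-a}s)\,s\;=\;r^{\mp b},
\]
so Kirchhoff's law forces $r^{b}=1$, i.e.\ the third edge carries the identity — the same contradiction as in the consecutive case. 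More slickly: a reflection $t$ satisfies $t^{-1}=t$, so $tgt=1$ gives $g=t\cdot t=1$ and $ttg=1$ gives $g=1$; hence for either of the two cyclic orders at a cubic vertex, two equal reflections force the remaining edge value to be the identity. (Orientations are harmless here: reflections are involutions and so need no orientation, and $g=1$ if and only if $g^{-1}=1$.) With this correction the case you flag as ``the main obstacle'' evaporates, and your proof closes without any of the speculative detours in your final paragraph.
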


Since a nowhere-identity $D_{2n}$-flow can consist entirely of rotations, there is no guarantee in general how large the matchings $M_a$ can be. For $n=2$ however, $M_0$ and $M_1$ are perfect matchings (and hence so is $E \setminus (M_0 \cup M_1)$). We examine what this means for $\D^{<2}$-valued flows. Write $\D^{<2} = \{1,x,x^{-1},y,z,w\}$ with
\[
1 = \begin{pmatrix} 1 & 0 \\ 0 & 1 \end{pmatrix}, x = \begin{pmatrix} 1 & 1 \\ 0 & 1 \end{pmatrix}, y = \begin{pmatrix} -1 & 1 \\ 0 & 1 \end{pmatrix}, z = \begin{pmatrix} -1 & 0 \\ 0 & 1 \end{pmatrix} \text{ and } w = \begin{pmatrix} -1 & -1 \\ 0 & 1 \end{pmatrix}.
\]
If $\pi_{\text{ab}}: \D \rightarrow \D/[\D,\D] \cong D_4$ denotes the canonical homomorphism, then for every $g \in D_4$ it holds that $\pi_{\text{ab}}^{-1}(\{g\}) \cap \D^{<2} = \Pi_{2}^{-1}(\{g\}) \cap \D^{<2}$, where $\Pi_2$ is the map from (\ref{equation:pin}). Therefore, as $M_0$ and $M_1$ are perfect matchings for every nowhere-identity $D_4$-flow, it follows that for every nowhere-identity dihedral $2$-flow $f$, the partition
\begin{equation}\label{equation:1factor}
E = \{e \mid f(e) = z\} \sqcup \{e \mid f(e) \in \{x^{\pm1}\}\} \sqcup \{e \mid f(e) \in \{y,w\}\},
\end{equation}
is a $1$-factorization, i.e., a $3$-edge coloring. We now prove the following theorem.

\begin{theorem}\label{theorem:equivalentie}
A cubic graph has a $3$-edge coloring if and only if it admits a nowhere-identity dihedral $2$-flow with respect some embedding.
\end{theorem}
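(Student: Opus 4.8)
The implication ``if a cubic graph admits a nowhere-identity dihedral $2$-flow with respect to some embedding then it has a $3$-edge coloring'' is already contained in the discussion preceding the statement, where it is shown that for any such flow the partition (\ref{equation:1factor}) is a $1$-factorization. So the plan is to prove the converse: given a cubic graph $\Gamma=(V,E)$ with a $3$-edge coloring $E=C_1\sqcup C_2\sqcup C_3$ into three perfect matchings, construct a rotation system $\pi$ and a nowhere-identity $\D$-flow on $\Gamma$ with values in $\D^{<2}$.

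Mimicking the shape of (\ref{equation:1factor}), I would set $f(e):=z$ for $e\in C_1$, fix an arbitrary orientation of each edge of $C_2$ and set $f(e):=x$ for $e\in C_2$, and set $f(e):=y$ for $e\in C_3$. Every value used lies in $\D^{<2}\setminus\{1\}$, so $f$ is automatically nowhere-identity; the only remaining task is to choose $\pi$ so that $f$ becomes a $\D$-flow. Because $\Gamma$ is cubic and the coloring is proper (so $\Gamma$ is loopless), every vertex $v$ meets exactly one edge $a\in C_1$, one edge $b\in C_2$ and one edge $c\in C_3$, and $\pi_v$ is simply one of the two cyclic orders of $\{a,b,c\}$. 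Since $z$ and $y$ are involutions, the contributions of $a$ and $c$ to Kirchhoff's product at $v$ are $z$ and $y$ irrespective of how $a$ and $c$ are oriented, whereas the contribution $h$ of $b$ is $x$ if $v$ is the head of $b$ and $x^{-1}$ if $v$ is its tail. The heart of the proof is then an elementary computation in $\D^{<2}$: from $zx=w$, $zx^{-1}=y$, $zw=x$, $zy=x^{-1}$ and the fact that $z,y,w$ are involutions one obtains $zxw=zx^{-1}y=1$ and $zyx=zwx^{-1}=1$, while none of the three ``reversed'' triples equals the identity. Hence, for whichever of $h=x$ or $h=x^{-1}$ actually occurs at $v$, exactly one of the two cyclic orders of $a,b,c$ makes the product of the three contributions at $v$ trivial; I declare $\pi_v$ to be that order. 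Doing this at every vertex yields a rotation system under which $f$ satisfies Kirchhoff's law everywhere, i.e.\ a nowhere-identity dihedral $2$-flow, which establishes the converse.

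The one genuine point — and the reason the statement holds ``for some embedding'' even though Example~\ref{examp:2punt} shows it can fail for a prescribed one — is the observation that the chosen orientations on $C_2$ and the value $y$ on $C_3$ impose no global compatibility condition whatsoever: a local violation of Kirchhoff's law is always repaired by the free binary choice of $\pi_v$, taken independently at each vertex, so the construction never gets stuck. Beyond that small multiplication-table computation there is nothing hard; one only records the routine facts that a properly $3$-edge-colored cubic graph is loopless (so the cyclic orders above are genuine cyclic orders on three distinct edges) and that every rotation system corresponds to a cellular embedding in some closed orientable surface, which is all the theorem demands. (Assigning $y$ throughout $C_3$ was only for definiteness: any assignment of $y$'s and $w$'s to $C_3$-edges works just as well, changing only which surface one ends up on.)
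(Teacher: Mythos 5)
Your proposal is correct and is essentially the paper's own proof: the author likewise assigns the reflection $z$ to one color class, a reflection ($y$) to a second, and the rotation $x$ to the third with an arbitrary orientation, then picks the cyclic order at each vertex according to whether the $x$-edge is incoming or outgoing (which is exactly your ``one of the two cyclic orders works for each of $h=x,x^{-1}$'' observation, since $zyx=zx^{-1}y=1$ while $zxy=zyx^{-1}=x^{-2}\neq 1$). You have merely written out the ``straightforward calculation'' the paper leaves implicit, and your closing remark about the freedom in choosing $y$ versus $w$ on the third class is a harmless addition.
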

\begin{proof}
Let $\Gamma = (V,E)$ be a cubic graph. By equation (\ref{equation:1factor}) every nowhere-identity dihedral $2$-flow of an embedding of $\Gamma$ yields a $3$-edge coloring of the underlying graph, showing the ``if" statement.\\
\indent To see ``only if", suppose $c: E \rightarrow \{1,2,3\}$ is a $3$-edge coloring of $\Gamma$. Direct the edges $e$ for which $c(e) = 3$ arbitrarily. For $v \in V$, set the cyclic order $\pi_v$ at $v$ as follows
\[
\pi_v = \left\{\begin{array}{ll} (e_1e_2e_3) & \text{if $e_3$ is incoming,}\\ (e_1e_3e_2) & \text{if $e_3$ is outgoing,}\end{array}\right.
\]
where $e_i$ is such that $c(e_i) = i$, for $i = 1,2,3$. Define $f: E \rightarrow \D^{<2}$ by 
\[
f(e) =  \left\{\begin{array}{ll} z & \text{if $c(e) = 1$}, \\ y & \text{if $c(e) = 2$}, \\ x & \text{if $c(e) = 3$}.\end{array}\right.
\]
It is a straightforward calculation to verify that $f$ is a nowhere-identity dihedral $2$-flow with respect to the embedding of $\Gamma$ given by $\{\pi_v \mid v \in V\}$.
\end{proof}

\begin{corollary}\label{corollary:npcomplete}
It is $\text{NP}$-complete to decide if a cubic graph has a nowhere-identity dihedral $2$-flow for some embedding.
\end{corollary}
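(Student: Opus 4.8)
The plan is to obtain Corollary~\ref{corollary:npcomplete} as an immediate consequence of Theorem~\ref{theorem:equivalentie} together with the classical NP-completeness of $3$-edge-colourability of cubic graphs, due to Holyer \cite{holyer81}.

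First I would record that, by Theorem~\ref{theorem:equivalentie}, the identity map on inputs is a polynomial-time reduction in both directions between the problem ``does the cubic graph $\Gamma$ admit a nowhere-identity dihedral $2$-flow for some embedding?'' and the problem ``is the cubic graph $\Gamma$ $3$-edge-colourable?''. Since Holyer showed that the latter is NP-complete, NP-hardness of the former follows at once.

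Next I would verify membership in NP. Here the one subtlety is that the input is an \emph{abstract} cubic graph with the embedding existentially quantified, so the rotation system must be supplied by the certificate rather than being part of the input. The most economical certificate is a proper $3$-edge colouring $c \colon E \to \{1,2,3\}$: it has size $O(|E|)$ and its validity is checked in linear time, and Theorem~\ref{theorem:equivalentie} guarantees that every yes-instance admits such a certificate. If one prefers a certificate phrased directly in terms of flows, the ``only if'' part of the proof of Theorem~\ref{theorem:equivalentie} converts $c$, in linear time, into a rotation system $\{\pi_v \mid v \in V\}$ (again $O(|E|)$ data) together with an assignment $f \colon E \to \D^{<2}$; one then checks in polynomial time that for each vertex the product of the incident values, taken in the cyclic order $\pi_v$ and with inverses on outgoing edges, is the identity matrix.

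I do not anticipate a genuine obstacle here, since Theorem~\ref{theorem:equivalentie} already carries all of the combinatorial weight. The only point that needs a moment's care is the one just noted: membership in NP concerns the existence of an embedding, so the rotation system belongs in the certificate, and one must confirm that the rotation system produced in the proof of Theorem~\ref{theorem:equivalentie} has size polynomial in the input---which it plainly does.
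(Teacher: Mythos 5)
Your proposal is correct and follows the same route as the paper: NP-hardness via Theorem~\ref{theorem:equivalentie} combined with Holyer's NP-completeness of $3$-edge-colourability of cubic graphs, and NP membership via a polynomial-size certificate. Your observation that the certificate must include the rotation system (or, equivalently, that a $3$-edge colouring suffices as certificate) is a careful refinement of the paper's terser ``a flow constitutes a certificate,'' but it is the same argument.
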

\begin{proof}
A nowhere-identity dihedral $2$-flow constitutes a certificate, hence the problem is in $\text{NP}$. Theorem \ref{theorem:equivalentie} shows that it is $\text{NP}$-complete.
\end{proof}

The proof of Theorem \ref{theorem:equivalentie} shows that for every $3$-edge colorable cubic graph there exists a nowhere-identity dihedral $2$-flow with respect to some embedding. It is not true that a $3$-edge colorable cubic admits such a flow for every embedding (Example \ref{examp:2punt}). The proof of Theorem \ref{theorem:equivalentie} also can be used to characterize nowhere-identity dihedral $2$-flows on embedded cubic graphs. Let $\Gamma = (V,E)$ be a cubic graph with rotation system $\pi$. Let $f: E \rightarrow \{1,2,3,4\}$ be a $4$-edge coloring. If the sets $\{e \mid f(e) = 1\}$ and $\{e \mid f(e) = 3\}$ form a perfect matching and if locally, with respect to $\pi$, every two vertices that share an edge $e$ for which $f(e) = 3$, look like one of
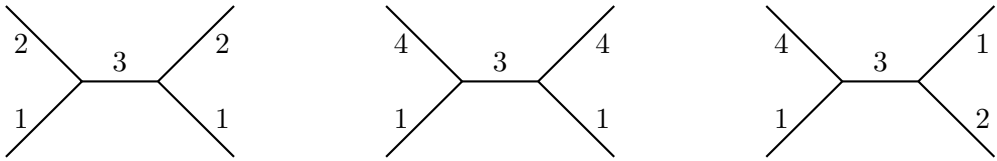
\begin{figure}[H]
\centering
\begin{tikzpicture}
\draw[thick]
    (0,0) -- (1,1) node[pos=0.5]{\hspace{-6mm}1}
 -- (0,2) node[pos=0.5]{\hspace{-6mm}2};
\draw[thick]
 (1,1) --(2,1)node[pos=0.5,anchor=south]{3}
--(3,0)node[pos=0.5]{\hspace{7mm}1};
\draw[thick]
(2,1)--(3,2)node[pos=0.5]{\hspace{7mm}2};

\draw[thick]
    (5,0) -- (6,1) node[pos=0.5]{\hspace{-6mm}1}
 -- (5,2) node[pos=0.5]{\hspace{-6mm}4};
\draw[thick]
 (6,1) --(7,1)node[pos=0.5,anchor=south]{3}
--(8,0)node[pos=0.5]{\hspace{7mm}1};
\draw[thick]
(7,1)--(8,2)node[pos=0.5]{\hspace{7mm}4};

\draw[thick]
    (10,0) -- (11,1) node[pos=0.5]{\hspace{-6mm}1}
 -- (10,2) node[pos=0.5]{\hspace{-6mm}4};
\draw[thick]
 (11,1) --(12,1)node[pos=0.5,anchor=south]{3}
--(13,0)node[pos=0.5]{\hspace{7mm}2};
\draw[thick]
(12,1)--(13,2)node[pos=0.5]{\hspace{7mm}1};
\end{tikzpicture}
\caption{special $4$-edge coloring}\label{fig:4ec}
\end{figure}
\noindent then $f$ is called a \textit{special} $4$\textit{-edge coloring} (with respect to $\pi$). 

\begin{theorem}
An embedded cubic graph has a nowhere-identity dihedral $2$-flow if and only if it has a special $4$-edge coloring.
\end{theorem}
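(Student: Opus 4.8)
The plan is to turn the $1$-factorization remark preceding the theorem into an explicit two-way translation between flows and colorings. I will work with the elements $x,y,z,w$ of $\D^{<2}$ fixed above: $x$ and $x^{-1}$ are the rotations, $y,z,w$ are the reflections (each an involution), and a short matrix computation yields the identities $xz=y$, $zx=w$, $x^{-1}z=w$ and $zx^{-1}=y$, which drive the whole argument.

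For the ``only if'' direction I would start from a nowhere-identity dihedral $2$-flow $f\colon E\to\D^{<2}$ of an embedded cubic graph $(\Gamma,\pi)$. As recorded before the theorem (via the reduction to $D_4\cong V_4$ and the partition in equation (\ref{equation:1factor})), the $z$-edges, the rotation edges and the $\{y,w\}$-edges are three perfect matchings, so every vertex meets exactly one edge of each kind. Color $e$ by $1$ if $f(e)=z$, by $3$ if $f(e)\in\{x,x^{-1}\}$, by $2$ if $f(e)=y$, and by $4$ if $f(e)=w$; then colors $1$ and $3$ are perfect matchings. The remaining task is the local condition at a color-$3$ edge $e=uv$. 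Since reflections are their own inverses, only the rotation edge contributes an orientation-dependent factor to Kirchhoff's law, so at $v$ record this contribution as $g_v\in\{x,x^{-1}\}$. Reading Kirchhoff's law at $v$ in the cyclic order $\pi_v$ (starting at $e$) and applying the identities above, one obtains: in cyclic order $(\text{rotation},z,\rho)$ one has $g_v=x$ precisely when the $\rho$-edge carries $y$, while in cyclic order $(\text{rotation},\rho,z)$ one has $g_v=x$ precisely when it carries $w$. Since the two half-edges of $e$ contribute $f(e)$ and $f(e)^{-1}$, we have $g_u=g_v^{-1}$; so running through the two possible cyclic orders at $u$ and at $v$ together with the $y/w$ choices on the two reflection edges incident with $e$ leaves exactly the three local configurations in Figure~\ref{fig:4ec}, up to reversing $\pi$ and swapping $u\leftrightarrow v$. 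Hence the coloring is special.

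For the converse I would start from a special $4$-edge coloring $f'$ with respect to $\pi$ and set $f=z$ on color-$1$ edges, $f=y$ on color-$2$ edges, and $f=w$ on color-$4$ edges. On a color-$3$ edge $e=uv$ the hypothesis that the local picture is one of the three admissible ones says exactly that the two ends demand inverse values $g_u=g_v^{-1}$ in $\{x,x^{-1}\}$, so I can orient $e$ and pick $f(e)\in\{x,x^{-1}\}$ realizing both demands at once. Each vertex meets exactly one color-$1$, one color-$3$ and one color-$\{2,4\}$ edge, by the matching and properness hypotheses, and running the case analysis of the previous paragraph backwards shows Kirchhoff's law holds at every vertex; no edge gets the identity, so $f$ is a nowhere-identity dihedral $2$-flow of $(\Gamma,\pi)$.

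I expect the only genuinely delicate point to be the orientation bookkeeping on color-$3$ edges: the rotation value forced at one endpoint must be compatible with the one forced at the other, and it is precisely the list of three admissible local pictures that encodes this ($g_u=g_v^{-1}$). The rest is the routine $2\times2\times2$ verification of Kirchhoff's law at a cubic vertex carrying one rotation and two reflections, using the matrix identities above. (If $\Gamma$ has a loop then neither side holds — a loop must carry a reflection, which forces the identity onto the third edge at that vertex, and a loop admits no proper edge coloring — so the statement is vacuous there, and one may as well assume $\Gamma$ is loopless.)
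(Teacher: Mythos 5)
Your proposal is correct and takes essentially the same route as the paper: both reduce the theorem to the local analysis of $abc=1$ in $\D^{<2}$ at a cubic vertex carrying one $z$, one rotation and one element of $\{y,w\}$, with the paper listing the solution pairs $(x,w),(x^{-1},y),(y,x),(w,x^{-1})$ and yours encoding the same data via the endpoint ``demands'' $g_u=g_v^{-1}$ on the rotation edge. Your write-up is somewhat more explicit about the orientation bookkeeping and about reading Figure~\ref{fig:4ec} up to reflection and relabelling (your colour assignment swaps $2$ and $4$ relative to the paper's $y\mapsto 4$, $w\mapsto 2$), but the underlying argument is the same.
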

\begin{proof}
Consider the equation $abc = 1$ in $\D^{<2}$, with variables $a,b$ and $c$. If none of the variables is allowed to be the identity, then, fixing $a = z$, the set of ordered pairs $(b,c)$ of solutions to $abc=1$ is given by 
\[
\{(x,w),(x^{-1},y),(y,x),(w,x^{-1})\}.
\]
The correspondence $x^{\pm 1} \mapsto 3, y \mapsto 4, w \mapsto 2$ and $z \mapsto 1$ yields the desired result. Indeed, note that in any of the pictures in Figure \ref{fig:4ec}, there is a unique way to direct the middle edge so that the correspondence just described gives a flow.
\end{proof}

\noindent We conclude the section with a question. 

\begin{question}
What is the complexity of deciding whether a given embedded cubic graph has a special $4$-edge coloring?
\end{question}

\subsection{Counting}\label{subsection:counting}

In this section $G$ denotes an abelian group of order $n$. In the introduction it was mentioned that the number of nowhere-zero $n$-flows in a graph is at least the number of nowhere-zero $G$-flows. The next example exhibits an embedded graph that admits nowhere-identity dihedral $4$-flows, but there are fewer of these than nowhere-identity $D_{8}$-flows.

\begin{examp}\label{examp:6vertextorus}
Let $\Gamma = (V,E)$ be the graph depicted in Figure \ref{fig:tellen}. There are four ways to assign a cyclic order to the leftmost three vertices, such that there is only one face containing those vertices. By symmetry, there are ten different rotation systems with respect to which $\Gamma$ has one face. By Euler's formula (\ref{equation:euler}), these all are embeddings of $\Gamma$ in the $2$-torus. By computer-aided calculation, each of these ten embeddings of $\Gamma$ in the $2$-torus has $576$ nowhere-identity $D_{8}$-flows and only $512$ nowhere-identity dihedral $4$-flows.
\end{examp}

\begin{figure}[H]
\centering
\begin{tikzpicture}
\SetGraphUnit{1.3}
\Vertex{a}
\EA(a){b}
\EA(b){c}
\EA(c){d}
\EA(d){e}
\EA(e){f}
\Edge[](a)(b)
\Edge[style={bend left=90}](a)(b)
\Edge[](b)(c)
\Edge[](c)(d)
\Edge[](d)(e)
\Edge[](e)(f)
\Edge[style={bend left=30}](a)(c)
\Edge[style={bend left=90}](e)(f)
\Edge[style={bend left=30}](d)(f)
\end{tikzpicture}
\caption{A cubic graph embeddable in the $2$-torus}\label{fig:tellen}
\end{figure}
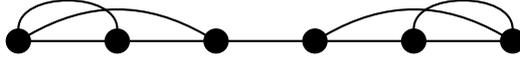

It was also mentioned in the introduction that both the number of nowhere-zero $n$-flows and the number of nowhere-zero $G$-flows are counted by a polynomial in $n$ \cite{kochol02,tutte54}. Example $5.11$ of \cite{goodall16} demonstrates that the number of nowhere-identity $D_{2n}$-flows in an embedded graph is a quasi-polynomial in $n$ of period $2$. The authors of \cite{goodall16} asked if the number of nowhere-identity dihedral $n$-flows is also counted by a quasi-polynomial in $n$ of period $2$. We were neither able to confirm nor to contradict this.

\section{Positive results}\label{section:pos}

In this section we discuss cases in which statement (\ref{equation:main}) holds. As in the previous section, all embedded graphs are embedded in an orientable surface. In addition, every graph in this section is assumed to be cubic.

\subsection{Non-zero reflection cycles}\label{subsection:nonzero}

In this section and the following we fix a cubic graph $\Gamma = (V,E)$ embedded in a closed orientable surface $S$. Let $f$ be a nowhere-identity $D_{2n}$-flow of $\Gamma$. If $f$ never evaluates to a reflection, then $f$ can be viewed as a nowhere-zero $\Z_n$-flow. By Theorem \ref{theorem:abelian}, $\Gamma$ then admits a nowhere-zero $n$-flow, which in particular is a nowhere-identity dihedral $n$-flow. We assume now that $f$ also evaluates to reflections.\\
\indent By Lemma \ref{lemma:regularmatching} the subgraph $\Gamma_f$ of $\Gamma$ consisting of edges that are reflections, is $2$-regular. A connected component of $\Gamma_f$ is called a \textit{reflection cycle with respect to $f$}. If the flow is clear from the context, we just speak of reflection cycles. For $a \in \Z$ and a reflection cycle $C \subseteq \Gamma_f$, define
\begin{equation}\label{equation:aC}
f_{a,C}: E \rightarrow D_{2n}, \hspace{2mm} e \mapsto \left\{\begin{array}{ll} f(e)r^{-a} & \text{if }e \in C, \\ f(e) & \text{otherwise,}\end{array}\right.
\end{equation}
with $r$ as in equation~(\ref{equation:dihedraal}). The map $f_{a,C}$ does not evaluate to the identity as by assumption $f(e)$ is a reflection for each $e \in C$. For all reflections $x,y \in D_{2n}$ we have $xr^{-a}yr^{-a} = xr^{-a}r^{a}y = xy$. These observations prove the following lemma.
\begin{lemma}\label{lemma:nietnulmaken}
Let $f$ be a nowhere-identity $D_{2n}$-flow of $\Gamma$ embedded in $S$, let $C$ be a reflection cycle and let $a \in \Z$. Then the map $f_{a,C}$ as defined in equation (\ref{equation:aC}) is again a nowhere-identity $D_{2n}$-flow.
\end{lemma}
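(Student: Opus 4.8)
The plan is to check the two defining properties of a nowhere-identity $D_{2n}$-flow for $f_{a,C}$ --- that it is nowhere-identity, and that Kirchhoff's law holds at every vertex --- both of which will reduce to two elementary facts about $D_{2n}$ already flagged in the text: the product of a reflection and a rotation is again a reflection (hence never the identity), and $r^{-a}x = xr^{a}$ for every reflection $x$.

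First I would dispose of the nowhere-identity property. Outside $C$ the map $f_{a,C}$ agrees with $f$, which is nowhere-identity by hypothesis; on an edge $e \in C$ the value $f(e)$ is a reflection and $r^{-a}$ a rotation, so $f_{a,C}(e) = f(e)r^{-a}$ is a reflection and in particular $\neq 1$.

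Next I would verify Kirchhoff's law at an arbitrary vertex $v$. Since $\Gamma$ is cubic, the rotation at $v$ is a cyclic triple $(e_1e_2e_3)$, and by Lemma~\ref{lemma:regularmatching} the subgraph $\Gamma_f$ is $2$-regular, so $0$ or $2$ of these edges carry reflections. If $v \notin C$, then --- $C$ being a connected component of $\Gamma_f$ --- none of the edges at $v$ lies in $C$, so $f_{a,C}$ and $f$ coincide near $v$ and there is nothing to prove. If $v \in C$, then (as $C$, a connected $2$-regular graph, is a cycle) exactly two of the edges at $v$ lie in $C$; these two are reflections, say $e$ and $e'$ (possibly coinciding, if the cycle is a loop at $v$), while the third edge $g$ carries a rotation value. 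In a cyclic triple $e$ and $e'$ are necessarily consecutive, so --- after cyclically rotating the product and using that reflections are self-inverse, which makes the orientations of $e,e'$ irrelevant --- the flow product at $v$ has the form $f(e)f(e')g^{\pm1} = 1$. Replacing $f(e),f(e')$ by $f(e)r^{-a},f(e')r^{-a}$ gives $f(e)r^{-a}f(e')r^{-a}g^{\pm1}$, which by $r^{-a}f(e') = f(e')r^{a}$ collapses back to $f(e)f(e')g^{\pm1}=1$. Hence $f_{a,C}$ again satisfies Kirchhoff's law at $v$.

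The one point needing a moment's thought --- and the only candidate for an obstacle --- is the claim that the two reflection edges at a vertex of $C$ are cyclically adjacent in the rotation system; this holds for free here solely because the graph is cubic (any two of three edges are adjacent in a $3$-cycle). Without cubicity one would have to shuttle the two $r^{-a}$ factors past the intervening flow values on the other edges at $v$, which cannot be done in general, so it is precisely the restriction to cubic graphs that makes the argument go through cleanly. Everything else is the routine bookkeeping with the two group identities recorded in the paragraph preceding the lemma.
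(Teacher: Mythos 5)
Your proof is correct and follows essentially the same route as the paper, which likewise rests on the two observations that a reflection times a rotation is a reflection (hence never the identity) and that $xr^{-a}yr^{-a}=xy$ for reflections $x,y$; you merely spell out the vertex-local verification (using the $2$-regularity from Lemma~\ref{lemma:regularmatching} and the cyclic adjacency of the two reflection edges at a degree-$3$ vertex) that the paper leaves implicit. No gaps.
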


A reflection cycle with no edge on which $f$ evaluates to $s$, as in equation (\ref{equation:dihedraal}), is called a \textit{non-zero reflection cycle}. When a reflection cycle $C$ of a $D_{2n}$-flow $f$ does not contain all $n$ reflections among its edge values, a value of $a \in \Z$ can be chosen so that the same cycle $C$ in the flow $f_{a,C}$ is a non-zero reflection cycle. In a moment we will see why non-zero reflection cycles are convenient. First we need another topological concept.\\
\indent Recall that $S$ denotes the surface in which $\Gamma$ is embedded. A loop in $S$ (i.e., a continuous image of the circle in $S$) is \textit{contractible} if it is homotopic to a point (i.e., the constant mapping). A loop is \textit{simple} if it has no self-intersections. \\
\indent Let $C$ be a simple contractible cycle of $\Gamma$ embedded in $S$. Define
\begin{equation}\label{equation:C}
f_C: E \rightarrow D_{2n}, \hspace{2mm} e \mapsto \left\{\begin{array}{ll} f(e)s & \text{if $e \in C$},\\ f(e) & \text{otherwise.}\end{array}\right.
\end{equation}
As $C$ is simple and contractible, it bounds a disk $D$ in $S$. The interior of $D$ is denoted by $D^{\circ}$. If $g: E \rightarrow D_{2n}$ is a map, then for $v \in V$ the product of the values on the edges incident with $v$ (in the order $\pi_v$) is denoted by $P_{g,\pi}(v)$, or by $P_{g}(v)$, if the rotation system is understood. The following lemma is of crucial importance in the remainder of Section \ref{section:pos}. 

\begin{lemma}\label{lemma:vermenigmets}
Let $f$ be a $D_{2n}$-flow of the cubic graph $\Gamma$ embedded in the orientable surface $S$. Let $C$ be a simple cycle of $\Gamma$ contractible in $S$, bounding a disk $D$ in $S$. Assume that $f$ does not evaluate to reflections on edges in the interior $D^{\circ}$ of $D$. Then the map $f_C$ as defined in equation (\ref{equation:C}) is again a $D_{2n}$-flow.
\end{lemma}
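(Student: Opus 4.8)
The plan is to reduce everything to a purely local verification at the vertices of $C$. Since $f_C$ agrees with $f$ on every edge not in $C$, the equation $P_{f_C}(v)=P_f(v)=1$ holds automatically at every $v\notin C$, so only the vertices on $C$ need attention; and because $\Gamma$ is cubic and $C$ is simple, each such vertex $v$ is incident with exactly two edges $c_1,c_2$ of $C$ and one edge $e\notin C$. The only algebra I would use in $D_{2n}$ is $s^2=1$ together with $srs^{-1}=r^{-1}$ (which follows from the presentation $r^n=srsr=s^2=1$), so that conjugation by $s$ inverts every rotation.

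I would first treat the case in which $D^{\circ}$ contains no edge of $\Gamma$; in particular this covers the case that $C$ bounds a face. Here, at every $v\in C$ the third edge $e$ leaves $v$ on the side of $C$ opposite to $D$. Reorienting the edges of $C$ so that they run consistently around the cycle is harmless, since inverting an edge together with its $f$-value changes neither the property of being a flow (recorded in Section~\ref{subsection:flow}) nor the hypothesis, inverting a rotation (resp.\ reflection) giving again a rotation (resp.\ reflection). With such an orientation one $C$-edge at $v$ points towards $v$ and the other away; and because $D$ lies on a fixed side of $C$, the rotation system $\pi_v$ lists the two $C$-edges consecutively, the incoming one first. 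In the product $P_{f_C}(v)$ their contributions are then $f(c_1)s$ followed immediately by $\bigl(f(c_2)s\bigr)^{-1}=s\,f(c_2)^{-1}$, the two copies of $s$ cancel, and $P_{f_C}(v)=P_f(v)=1$. The same bookkeeping isolates the only other local picture, needed below: if in $\pi_v$ the edge $e$ separates $c_1$ from $c_2$, then $P_{f_C}(v)$ equals $P_f(v)$ with the factor $f(e)^{\pm1}$ replaced by $s\,f(e)^{\pm1}s$, i.e.\ by its $s$-conjugate, which changes nothing precisely when $f(e)$ is fixed under conjugation by $s$.

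The general case I would handle by induction on the number of edges of $\Gamma$ lying in $D^{\circ}$. If that number is positive, choose a face $\sigma$ of $\Gamma$ with $\overline{\sigma}\subseteq D$ sharing an edge with $C$ and whose boundary $C_\sigma$ is a simple cycle meeting $C$ along a single arc; then $C'':=C\mathbin{\triangle}C_\sigma$ is again a simple contractible cycle, bounding the disk $D'':=D\setminus\overline{\sigma}$, which contains strictly fewer edges in its interior and still no reflection value of $f$. Tracking which edges receive the factor $s$ — those in $C\cap C_\sigma$ receive it twice, hence not at all — gives the identity $f_C=(f_{C''})_{C_\sigma}$. By the inductive hypothesis $f_{C''}$ is a $D_{2n}$-flow, and since $\sigma^{\circ}$ contains no edge the base case applies to $f_{C''}$ and $C_\sigma$; hence $f_C=(f_{C''})_{C_\sigma}$ is again a $D_{2n}$-flow. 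The hypothesis that $f$ has no reflections in $D^{\circ}$ serves exactly to keep the inductive hypothesis applicable to the shrinking disks $D''$; at the base of the induction it is vacuous.

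The step I expect to be the main obstacle is the bookkeeping in the base case: making rigorous, from the rotation system alone together with the fact that $C$ bounds $D$, the claim that a suitable consistent orientation of $C$ places the two $C$-edges consecutively at every vertex with the incoming one first, so that the two copies of $s$ really do cancel in $P_{f_C}(v)$. A secondary point is the existence, in the inductive step, of a face $\sigma$ with simple boundary meeting $C$ in a single arc; this should follow from connectivity of $\Gamma$ and the disk structure of $D$, but degenerate faces (for instance one bounded by a repeated edge) may require a short separate argument or a preliminary subdivision.
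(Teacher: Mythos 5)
Your base case is sound and is essentially the computation the paper performs at the vertices of $C$: with the cyclic orders consistent and $C$ oriented against them, the two $C$-edges at a vertex $v$ whose third edge leaves $D$ contribute $f(c_1)s\cdot s f(c_2)^{-1}=f(c_1)f(c_2)^{-1}$ to $P_{f_C}(v)$, so $P_{f_C}(v)=P_f(v)=1$. The genuine gap is in the inductive step: the face $\sigma\subseteq D$ with simple boundary $C_\sigma$ meeting $C$ in a single arc need not exist. If the plane graph $\Gamma\cap D$ has a bridge in $D^{\circ}$ --- say a single edge from a vertex of $C$ into $D^{\circ}$ with a small planar cubic piece hanging off it, which is perfectly compatible with $\Gamma$ being cubic and with the hypotheses of the lemma --- then the only face inside $D$ that is incident with an edge of $C$ traverses that bridge twice, its boundary walk is not a simple cycle, and no interior face with simple boundary touches $C$ at all. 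Subdividing edges does not change the face structure, so the escape routes you mention do not close this. Even when $\Gamma\cap D$ is $2$-connected, a face adjacent to $C$ may meet it in several disjoint arcs (so that $C\mathbin{\triangle}C_\sigma$ is not a cycle), and the existence of \emph{some} face meeting $C$ in exactly one arc is itself a claim that needs proof.

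The paper avoids the induction entirely by a single global verification, and that is the natural repair of your argument. All edges in $D^{\circ}$ carry rotations by hypothesis; reverse the orientation of every one of them simultaneously. At a vertex interior to $D$ all three incident values are rotations, which commute, so the vertex product is replaced by its inverse and is still the identity. At a vertex $v$ of $C$ whose third edge $e$ lies in $D^{\circ}$ --- your ``bad'' local picture, where $e$ separates $c_1$ from $c_2$ in $\pi_v$ --- the two inserted copies of $s$ conjugate $f(e)^{\pm 1}$ to $f(e)^{\mp 1}$, and the orientation reversal of $e$ undoes exactly this, giving $P_{f_C}(v)=P_f(v)=1$ again. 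This handles arbitrary interior structure (bridges, non-simple faces, faces meeting $C$ in several arcs) uniformly, with no induction and no existence claim about faces. One further small point, which you share with the paper: reorienting edges of $C$ and inverting their $f$-values changes the literal map $f_C$ on rotation-valued edges of $C$ (from $f(c)s$ to $f(c)^{-1}s$), so strictly one proves that $f_C$ is a flow only up to such inversions; this is harmless in every application, since being nowhere-identity and taking values in $\D^{<n}$ are preserved under inversion.
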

\begin{proof}
The disk $D$ can be drawn in the plane such that all vertices on $C$ have a consistent cyclic order (all clockwise or all counterclockwise in $S$). Choose a local orientation of the cycle $C$ opposite to the sense of these cyclic orders (counterclockwise if the sense of the cyclic orders is clockwise, or vice versa). We may assume that all edges on $C$ on which $f$ evaluates to rotations, are directed in accordance with the local orientation of $C$. \\
\indent In order to prove the lemma, we first specify an orientation of the edges on which $f_C$ is equal to a rotation. Edges in $D^{\circ}$, on which $f_C$ coincides with $f$, are by assumption given a rotation value by $f$, and these edges have their orientations reversed. Edges in $D\setminus D^{\circ}$, on which $f_C$ also coincides with $f$, retain their original orientation. Any edges that are given a rotation value by $f_C$ but not by $f$ lie on $C$: the orientation for these edges is chosen in accordance with the local orientation of $C$. That $f_C$ is a flow now follows from the observation that for $v \in V$ we have
\[
P_{f_C}(v) = \left\{\begin{array}{ll} P_{f}(v)^{-1} = 1 & \text{if $v \in D_{\text{in}} \cup D_{\text{out}}$},\\P_f(v) = 1 & \text{otherwise,}\end{array}\right.
\]
where $D_{\text{in}}$ consists of those vertices in $D$ that are incident with an edge that lies in $D^{\circ}$, and $D_{\text{out}}$ consists of vertices on $C$ that are incident with an edge outside $D$ that is a reflection. 
\end{proof}

\noindent In fact, the proof given above shows that Lemma \ref{lemma:vermenigmets} still holds when in the statement ``$D_{2n}$-flow'' is replaced by ``dihedral $n$-flow'', while replacing the element $s$ in the definition (\ref{equation:C}) of $f_C$ by $\begin{psmallmatrix} -1 & 0 \\ 0 & 1 \end{psmallmatrix} \in \D^{<n}$. This will be needed in Section \ref{subsection:snarks}. 

\begin{theorem}\label{theorem:nonzerorc}
Let $f$ be a nowhere-identity $D_{2n}$-flow of $\Gamma$ embedded in $S$. If all reflection cycles are non-zero and contractible (on the surface $S$), then $\Gamma$ admits a nowhere-zero $n$-flow.
\end{theorem}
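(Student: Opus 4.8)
The plan is to use the non-zero reflection cycles to peel off reflections one cycle at a time until no reflections remain, at which point the flow is (essentially) a $\Z_n$-flow and Theorem \ref{theorem:abelian} finishes the job. So first I would note, as in the paragraph preceding the statement, that if $f$ never takes a reflection value then $f$ descends to a nowhere-zero $\Z_n$-flow, hence by Theorem \ref{theorem:abelian} $\Gamma$ has a nowhere-zero $n$-flow; this is the base case. For the inductive step, suppose $f$ does evaluate to some reflection, so $\Gamma_f$ is a nonempty $2$-regular subgraph by Lemma \ref{lemma:regularmatching}. Pick a reflection cycle $C$; by hypothesis $C$ is non-zero (contains no edge with value $s$) and contractible on $S$, so it bounds a disk $D$ in $S$.

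The key step is to show we can apply Lemma \ref{lemma:vermenigmets} to this $C$, i.e.\ that $f$ takes no reflection value on edges in the interior $D^{\circ}$. This is where I would need to be a little careful, and I expect it to be the main obstacle. The point is that $\Gamma_f$ is a disjoint union of cycles, and the reflection cycles other than $C$ that meet $D^{\circ}$ would each be simple closed curves inside the disk $D$, hence themselves contractible — consistent with the hypothesis, but they still obstruct a direct application of the lemma. The clean fix is to choose $C$ to be an \emph{innermost} contractible reflection cycle: among all reflection cycles, pick one bounding a disk $D$ whose interior contains no other reflection cycle. (Such a choice exists because $\Gamma_f$ has finitely many components, each contractible, and the disks they bound are nested or disjoint up to the usual Jordan-curve considerations on the disk; take one minimal under inclusion of the bounded disks.) With this choice, $f$ takes no reflection value on $D^{\circ}$, so Lemma \ref{lemma:vermenigmets} applies and $f_C$, obtained from $f$ by right-multiplying the $C$-edge values by $s$, is again a $D_{2n}$-flow.

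Now I would verify that $f_C$ is still \emph{nowhere-identity}: on $C$, each value was a reflection $r^a s$ with $a \neq 0$ (since $C$ is non-zero), and $r^a s \cdot s = r^a \neq 1$; off $C$ nothing changed. Moreover $f_C$ has strictly fewer reflection-valued edges than $f$ (all of $C$ became rotations, and no new reflections were created), so after finitely many such steps we reach a nowhere-identity $D_{2n}$-flow with no reflection values, and the base case applies. One should also check that the remaining reflection cycles of $f_C$ are still non-zero and contractible so the induction hypothesis is maintained — but in fact we do not need this: the induction can simply be on the number of reflection-valued edges, using at each stage merely that \emph{some} innermost reflection cycle is contractible, which we can arrange to be non-zero by first applying Lemma \ref{lemma:nietnulmaken} (replacing $f$ by $f_{a,C}$ for a suitable $a$) whenever the chosen innermost cycle happens to contain an $s$-valued edge but not all $n$ reflections. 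The only genuinely new idea beyond the lemmas already proved is the innermost-cycle choice, so I would spell that out carefully and keep the rest brief.
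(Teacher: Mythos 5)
Your proposal is correct and follows essentially the same route as the paper: induct, pick an innermost non-zero contractible reflection cycle so that Lemma \ref{lemma:vermenigmets} applies, and note that $f_C$ stays nowhere-identity because the cycle is non-zero (the paper phrases the innermost choice as ``otherwise continue with a reflection cycle enclosed by $C$'' and inducts on the number of such cycles rather than on reflection-valued edges, which is immaterial). The closing detour through Lemma \ref{lemma:nietnulmaken} is unnecessary, since the theorem's hypothesis already makes every reflection cycle non-zero and the remaining cycles of $f_C$ are a subset of those of $f$.
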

\begin{proof}
The proof goes via induction on the number $t$ of non-zero contractible reflection cycles. If $t = 0$ then we are done by the remark at the beginning of this section. Suppose that $t > 0$. Let $C$ be a non-zero reflection cycle. As it is a reflection cycle, it is simple. We may assume that all edges in the interior of the disk that $C$ bounds, are rotations (otherwise we would have continued with a non-zero reflection cycle enclosed by $C$). Then $f_C$ is a $D_{2n}$-flow by Lemma \ref{lemma:vermenigmets}. Moreover, it is nowhere-identity as $C$ is a non-zero reflection cycle. By construction we have produced a nowhere-identity $D_{2n}$-flow with $t-1$ non-zero contractible reflection cycles, hence by the induction hypothesis we are done.
\end{proof}

\indent Neither of the two assumptions in the statement of Theorem \ref{theorem:nonzerorc} can be weakened. Indeed, the nowhere-identity $D_6$-flow $f$ described in equation~(\ref{equation:flowcounterexa}) of the embedded graph in Figure~\ref{fig:M1} has the property that all reflection cycles are non-zero and non-contractible. Since the embedded graph in Figure~\ref{fig:M1} has bridges, it does not have a nowhere-zero $3$-flow. On the other hand, it is easy to construct a nowhere-zero $D_6$-flow of a planar embedding of the complete graph $K_4$ on four vertices, with a reflection cycle that is not non-zero and which is contractible (as is any cycle in the plane). Since $K_4$ is not bipartite, it does not have a nowhere-zero $3$-flow (\cite[Prop.\ $6.4.2$]{diestel00}).\\
\indent Lemma \ref{lemma:nietnulmaken} may be useful in creating non-zero reflection cycles, for instance if $n$ is larger than the size of the largest reflection cycle. We remark that contractibility of cycles can be tested in linear time \cite{cabello11}. In Section \ref{subsection:short} we give applications of Theorem \ref{theorem:nonzerorc}. 

\subsection{Short reflection cycles}\label{subsection:short}

Given a nowhere-identity $D_{2n}$-flow $f$, call a nowhere-identity dihedral $n$-flow $\widetilde{f}$ for which $\Pi_n \circ \widetilde{f} = f$ (with $\Pi_n$ as in equation (\ref{equation:pin})) a \textit{lift} of $f$. In Theorem \ref{theorem:nonzerorc} we have seen that non-zero contractible reflection cycles can be modified to rotations to ensure the existence of a lift. Example \ref{examp:6vertextorus} demonstrates that this does not happen always. We now consider lifts of flows that have non-contractible reflection cycles. We will show that for odd $n$, unique reflection cycles that are short can be adjusted to rotations, in which case the given nowhere-identity $D_{2n}$-flow can be lifted to a nowhere-identity dihedral $n$-flow taking rotation values only, equivalently, a nowhere-zero $n$-flow. It is worth examining an example first.

\begin{examp}\label{examp:2punt2}
Consider the graph from Example \ref{examp:2punt}. It admits nowhere-identity $D_{2n}$-flows\footnote{Indeed, the assignment $f(e_1) =  r^{n/2}, f(e_2) = r^{n/2}s$ and $f(e_3) = s$ specifies a flow $f$ for even $n$. For odd $n$, one can take $f(e_1) = r^{n-2}, f(e_2) = r$ and $f(e_3) = r$.} for all $n \geq 2$. If $f$ is a nowhere-identity $D_{2n}$-flow with reflections, then $n$ is even. Consider such a flow. Then the reflection cycle is non-contractible. By the same argument as in Example \ref{examp:2punt}, any lift of $f$ is forced to assign the identity to the edge $e_1$. Hence, any nowhere-identity dihedral $n$-flow (which exists for $n \geq 3$) consists entirely of rotations.\end{examp}

Recall that we have fixed a cubic graph $\Gamma = (V,E)$ embedded in a closed orientable surface $S$.

\begin{proposition}\label{proposition:lengte2}
Let $f$ be a nowhere-identity $D_{2n}$-flow of $\Gamma$ embedded in $S$, with $n$ odd. If there is a unique reflection cycle and it is of length $2$, then $\Gamma$ admits a nowhere-zero $n$-flow. 
\end{proposition}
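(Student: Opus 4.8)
The plan is to exploit the unique reflection cycle $C$ of length $2$, together with Lemma~\ref{lemma:nietnulmaken}, to turn $C$ into a ``plane-sided'' local configuration where the flow equations force a contradiction unless the reflections can be removed. Since $C$ has length $2$, its two edges $e_1, e_2$ join the same pair of vertices $u, v$ (because $C$ is a cycle of $\Gamma$, hence simple, and $\Gamma$ is cubic so $C$ must be a $2$-cycle on two vertices). Each of $u$ and $v$ is then incident with exactly one further edge, a rotation edge. The key point is that, by Lemma~\ref{lemma:nietnulmaken} applied with $C$ and a suitable $a \in \Z$, we may assume $f(e_1) = f(e_2) = s$ (this is where oddness of $n$ is used: the two edges of $C$ carry reflections $r^{b}s$ and $r^{c}s$, and multiplying the cycle by $r^{-a}$ shifts both exponents by $-a$; but to make \emph{both} equal to $s$ we actually need $b = c$, which I will have to extract from the flow equations first — see below).

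First I would write out Kirchhoff's law at $u$ and at $v$. Say the third edge at $u$ is $g_u$ (a rotation) and at $v$ is $g_v$ (a rotation), oriented away from $u$, resp. $v$. Depending on the rotation system, the product at $u$ reads, up to cyclic rotation and taking inverses of outgoing edges, $f(e_1)f(e_2)^{\pm1}g_u^{-1} = 1$ or a reordering thereof; since $f(e_1), f(e_2)$ are reflections (order $2$, equal to their inverses), the product of the two reflection values is a rotation, and the equation forces $g_u = f(e_1)f(e_2)$ or $f(e_2)f(e_1)$ — in particular $g_u$ is determined, and similarly $g_v$. Comparing the two vertex equations, and using that $\langle r\rangle$ is abelian, one deduces a relation between $f(e_1)$ and $f(e_2)$: concretely, writing $f(e_1) = r^b s$ and $f(e_2) = r^c s$, the two equations yield $r^{b-c} = g_u = g_v^{-1}$ or $g_u = g_v$, which pins down $g_u, g_v \in \langle r\rangle$ in terms of $b-c$ alone. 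Then $f_{b,C}$ (or $f_{c,C}$) replaces $f(e_1), f(e_2)$ by $r^{b-a}s, r^{c-a}s$ while leaving everything else fixed and remaining a nowhere-identity $D_{2n}$-flow by Lemma~\ref{lemma:nietnulmaken}; choosing $a$ with $2a \equiv b + c \pmod n$, which is possible since $n$ is odd (so $2$ is invertible mod $n$), makes the product $f(e_1)f(e_2)$ equal to $r^{b+c-2a} = r^0 = 1$, i.e. $g_u = g_v = 1$. But $g_u$ is a rotation edge incident with $u$, and a $D_{2n}$-flow assigning the identity to an edge that is not a loop — here $g_u$ is a genuine edge — contradicts nothing by itself; rather, the contradiction is with \emph{nowhere-identity}. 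Hence no such reflection cycle can survive: the flow $f_{a,C}$, which exists and is a valid $D_{2n}$-flow, is \emph{not} nowhere-identity, so $f$ itself cannot exist unless $\Gamma \setminus \{e_1,e_2\}$ already carries the obstruction — at which point I would instead argue that $\Gamma$ must have had a nowhere-zero $n$-flow to begin with because collapsing the $2$-cycle reduces to a smaller graph.

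Let me restructure this more cleanly: contract the $2$-cycle $C$ (identify $u$ and $v$, delete $e_1, e_2$). The resulting graph $\Gamma'$ is again cubic (the merged vertex has degree $2$, suppress it) with an inherited embedding, and has strictly fewer edges; moreover $g_u$ and $g_v$ become a single edge $g$ of $\Gamma'$. A nowhere-zero $n$-flow of $\Gamma'$ pulls back to one of $\Gamma$ by assigning $e_1, e_2$ the rotation value that balances $g$ at $u$ and $v$ — the computation above shows the values are consistent precisely because $n$ is odd. So it suffices to produce a nowhere-identity $D_{2n}$-flow of $\Gamma'$ and induct, or directly a nowhere-zero $n$-flow of $\Gamma'$. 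The flow $f$ on $\Gamma$ restricted away from $C$, with $g$ assigned $g_u = g_v = $ the rotation $f(e_1)f(e_2)$, is a $D_{2n}$-flow of $\Gamma'$; if it has any reflections they form reflection cycles of $\Gamma'$, and if it has none it is a nowhere-zero $\Z_n$-flow, giving a nowhere-zero $n$-flow of $\Gamma'$ by Theorem~\ref{theorem:abelian}, hence of $\Gamma$, and we are done. The remaining case — $f$ on $\Gamma'$ still has reflections — needs care: the reflection cycles of $\Gamma$ other than $C$ become reflection cycles of $\Gamma'$, but $f$ was assumed to have $C$ as its \emph{unique} reflection cycle, so $f$ on $\Gamma'$ has \emph{no} reflections, and we are in the clean case.

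The main obstacle I anticipate is the bookkeeping of the local rotation system at $u$ and $v$ and getting the orientation conventions right so that ``the product of the two reflection values equals the rotation on the third edge'' comes out with the correct sign; there are a handful of cases (four, by the position of the rotation edge in the cyclic order at each of $u, v$, times the two orientations of $g$), and one must check that in each the conclusion $g_u = f(e_1)f(e_2)^{\pm1}$ holds and that the pull-back to $\Gamma$ of a nowhere-zero $n$-flow of $\Gamma'$ is consistent. The oddness of $n$ enters exactly once and essentially — to invert $2$ modulo $n$ when solving $2a \equiv b+c$, equivalently to split the rotation $g_u$ into two equal halves $r^a, r^{-a}$ with $r^a \cdot (\text{reflection}) \cdot (\text{reflection}) \cdot r^{-a}$ collapsing correctly — mirroring the phenomenon seen in Example~\ref{examp:2punt2}, where for even $n$ the forced identity value genuinely obstructs a lift.
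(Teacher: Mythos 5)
Your restructured plan --- contract the digon $C$, observe that the contracted flow has no reflections left (by uniqueness of $C$), obtain a nowhere-zero $\Z_n$-flow of the smaller graph, and pull it back --- is a genuinely different route from the paper's, which instead normalizes $C$ to a non-zero reflection cycle, proves that $C$ must be contractible, and then invokes Theorem~\ref{theorem:nonzerorc}. But as written your argument has a real gap at its central step: you never establish that the contracted assignment satisfies Kirchhoff's law at the merged vertex, i.e.\ that the rotation values $g_u=r^a$ and $g_v=r^d$ on the two third edges satisfy $r^{a+d}=1$ (both oriented inward). Your only source of a relation between $a$ and $d$ is the pair of local vertex equations at $u$ and $v$, and these yield $a\equiv d$ or $a\equiv -d\pmod n$ \emph{depending on the rotation system}. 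In the case $a\equiv d$ (exactly the case where $C$ is non-contractible) the consistency you need, $a\equiv -d$, does not follow locally; the assertion that ``the computation above shows the values are consistent precisely because $n$ is odd'' has the logic backwards, since for odd $n$ and $a\equiv d$ consistency would force $2a\equiv 0$, hence $a\equiv 0$, contradicting nowhere-identity --- so you must show this case cannot occur, not that it is consistent. The missing ingredient is the global cut-set identity $\prod_{e\in\delta(\{u,v\})}f(e)=1$: because $C$ is the \emph{unique} reflection cycle, every edge meeting $V\setminus\{u,v\}$ carries a rotation, so summing the (abelian) vertex equations over $V\setminus\{u,v\}$ gives $a+d\equiv 0$ outright. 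This is precisely the step the paper carries out, and it is where uniqueness of the reflection cycle is used globally; combined with $a\equiv d$ and oddness it eliminates the non-contractible case. (The degenerate case $V\setminus\{u,v\}=\emptyset$, the theta graph, where the cut is empty and the merged vertex carries a loop, also needs separate treatment, as in the paper.)

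A second, smaller error: in your first paragraph you compute that passing from $f$ to $f_{a,C}$ changes the product of the two reflection values to $r^{b+c-2a}$. It does not: $(r^{b-a}s)(r^{c-a}s)=r^{b-a}r^{-(c-a)}=r^{b-c}$, independent of $a$, so solving $2a\equiv b+c$ accomplishes nothing and the ensuing ``contradiction with nowhere-identity'' is spurious; oddness of $n$ does not enter there. Once the cut-set identity is in place, however, your idea of replacing the two reflection edges by rotations $r^{\beta},r^{\gamma}$ with $\beta+\gamma\equiv a$ and $\beta\notin\{0,a\}$ (possible for $n\geq 3$) does produce a nowhere-zero $\Z_n$-flow on $\Gamma$ directly, without the contraction detour and without any appeal to contractibility of $C$; that is the part of your plan worth keeping and making precise.
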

\begin{proof}
Let $C$ denote the unique reflection cycle. After possibly considering the flows $f_{a,C}$ from Lemma \ref{lemma:nietnulmaken} for some $a \in \Z$, we may assume that $C$ is a non-zero reflection cycle.  We will argue that either $C$ is contractible or $\Gamma$ is the graph from Example \ref{examp:2punt}. For the latter case it is clear that $\Gamma$ admits nowhere-zero $n$-flows. In case $C$ is contractible, then applying Theorem \ref{theorem:nonzerorc} yields the proposition.\\
\indent Suppose that $C$ is non-contractible and that $\Gamma$ is not the graph from Example \ref{examp:2punt}. Let $v_1$ and $v_2$ denote the vertices on $C$. As $C$ is non-contractible, we may assume that the cyclic orders at $v_1$ and $v_2$ are given by $\pi_{v_1} = (e_1e_2e_3)$ and $\pi_{v_2} = (e_2e_3e_4)$ (where $e_1 \neq e_4$). Write $f(e_1) = r^a, f(e_2) = r^bs, f(e_3) = r^cs$ and $f(e_4) = r^d$, with $0 < a,b,c,d < n$. Assuming that $e_1$ and $e_4$ are directed towards $v_1$ and $v_2$, respectively, the flow equations yield $a+b-c \equiv 0 \text{ mod }n$ and $d+b-c \equiv 0 \text{ mod }n$. It follows that $a \equiv d \text{ mod }n$. On the other hand, on the cut-set $X := \{v_1,v_2\}$ we calculate that
\[
r^{a+d} = \prod_{e \in \delta(X)}f(e) = \prod_{e \in \delta(V\setminus X)}f(e) = 1,
\]
where the second and third equality follow from the fact that $C$ is the unique reflection cycle (i.e., the rest of the graph consists of rotations). Hence we also must have $a \equiv -d \text{ mod }n$, which is impossible for odd $n$. 
\end{proof}

A result in the same vein holds for reflection cycles of length $3$. We first need a lemma.

\begin{lemma}\label{lemma:bridgerc}
If the graph $\Gamma$ has a bridge, then there cannot exist a nowhere-identity $D_{2n}$-flow of $\Gamma$ embedded in $S$ with a unique reflection cycle.
\end{lemma}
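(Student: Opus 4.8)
The plan is to combine Lemma~\ref{lemma:regularmatching} with Lemma~\ref{lemma:commsubgroup}, applied to the dihedral group $D_{2n}$ itself, exactly as in the bridge discussion following Lemma~\ref{lemma:commsubgroup}. Suppose for contradiction that $\Gamma$ has a bridge $e = uv$ and that $f$ is a nowhere-identity $D_{2n}$-flow of $\Gamma$ (embedded in $S$) whose reflection subgraph $\Gamma_f$ is a single cycle $C$. Removing $e$ splits $\Gamma$ into two components; let $X \subseteq V$ be the vertex set of one of them, so $\delta(X) = \{e\}$. By Lemma~\ref{lemma:commsubgroup}, $f(e) \in [D_{2n},D_{2n}] = \langle r^2\rangle$, so in particular $f(e)$ is a rotation. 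Hence $e \notin C$, and $C$ lies entirely inside one of the two components, say the one on $X$ (the other case is symmetric).

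Next I would localise the contradiction to the other component. Consider the vertex $u$ (the endpoint of the bridge not in $X$, lying in the reflection-free component). Every vertex of $\Gamma$ has degree $3$, and by Lemma~\ref{lemma:regularmatching} the reflection edges at any vertex come in an even number, hence $0$ or $2$; at $u$ there are $0$, so all three edges at $u$ carry rotation values. More importantly, the whole component $\Gamma[V\setminus X]$ contains no edge of $C$, so $f$ restricted to it takes values in the abelian subgroup $\langle r\rangle \cong \Z_n$. Now look at the cut $\delta(V\setminus X) = \{e\}$ from the other side, or equivalently apply Kirchhoff's law vertex-by-vertex inside $\Gamma[V\setminus X]$: summing the flow relations over all vertices of $V\setminus X$, every internal edge is counted twice (once with each sign) and cancels in $\Z_n$, leaving $f(e) \equiv 0$, i.e.\ $f(e) = 1$. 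This contradicts the assumption that $f$ is nowhere-identity.

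A cleaner way to phrase the last step, avoiding any reference to orientations of internal edges, is: the restriction of $f$ to $\Gamma[V\setminus X]$ together with the single extra edge $e$ is a $\Z_n$-flow on the graph $\Gamma[V\setminus X] + e$, which has a bridge, namely $e$; abelian flows vanish on bridges (\cite[Prop.\ $6.1.1$]{diestel00}), so $f(e) = 1$, again a contradiction. I would present the argument in this form, since it reuses the exact fact already cited in the proof of Lemma~\ref{lemma:commsubgroup} and needs no new bookkeeping.

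The only point requiring a little care — and the place I expect a referee to look — is the claim that $C$ must lie wholly within one of the two components of $\Gamma \setminus e$. This is immediate once we know $f(e)$ is a rotation (so $e \notin C$), because $C$ is connected and $e$ is the unique edge joining the two components; I would state this explicitly rather than leave it implicit. Everything else is a direct invocation of Lemmas~\ref{lemma:commsubgroup} and~\ref{lemma:regularmatching} and the standard fact about abelian flows on bridges, so the proof should be short.
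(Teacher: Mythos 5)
Your proof is correct, and its structural core coincides with the paper's: since the unique reflection cycle $C$ cannot contain the bridge $e$, the component of $\Gamma\setminus e$ not containing $C$ carries only rotation values, and this forces $f(e)$ to be the identity. Where you diverge is in the finishing step. The paper contracts the rotation-only component (using that rotations commute, so the cyclic orders at its vertices may be modified freely without disturbing Kirchhoff's law) so as to make that side planar; then $e$ is a plane-sided bridge and Lemma~\ref{lemma:planesided} gives the contradiction. You instead observe that all values on that side lie in the abelian subgroup $\langle r\rangle\cong\Z_n$ and apply the standard cut-set identity for abelian flows to conclude $f(e)=1$ directly. Your route is more elementary and avoids the contraction and re-embedding bookkeeping; the paper's route reuses Lemma~\ref{lemma:planesided} as a black box. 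Two small remarks. First, in your ``cleaner'' reformulation, $f$ restricted to $\Gamma[V\setminus X]+e$ is not literally a flow at the dangling endpoint of $e$, so the summation form of the argument (summing the vertex relations over $V\setminus X$ only) is the version to keep. Second, $e\notin C$ already follows from $e$ being a bridge, since a bridge lies on no cycle, so the appeal to Lemma~\ref{lemma:commsubgroup} at that point is optional.
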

\begin{proof}
Let $e$ be a bridge of $\Gamma$. Assume there exists a nowhere-identity $D_{2n}$-flow $f$ with unique reflection cycle $C$. Let $\Gamma_e$ denote the connected component of $\Gamma \setminus e$ not containing $C$. Then $\Gamma_e$ consists entirely of edges on which $f$ evaluates to rotations. As rotations commute, we can contract edges in $\Gamma_e$ and modify cyclic orders at vertices in $\Gamma_e$ to obtain a planar graph with nowhere-identity $D_{2n}$-flow $f'$. But then $e$ is plane-sided and the existence of $f'$ contradicts Lemma \ref{lemma:planesided}.
\end{proof}

\begin{proposition}\label{proposition:lengte3}
Let $f$ be a nowhere-identity $D_{2n}$-flow of $\Gamma$ embedded in $S$, with $n$ odd and $n \geq 5$. If there is a unique reflection cycle and it is of length $3$, then $\Gamma$ admits a nowhere-zero $n$-flow.
\end{proposition}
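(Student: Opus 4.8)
The plan is to follow the template of the proof of Proposition~\ref{proposition:lengte2}: reduce to the case where the unique reflection cycle $C$ is non-zero, then split on whether $C$ is contractible in $S$. Since the subgraph underlying $C$ is $2$-regular, connected and has three edges, $C$ is a triangle $v_1v_2v_3$; write its edges as $c_3=v_1v_2$, $c_1=v_2v_3$, $c_2=v_3v_1$, and note that by Lemma~\ref{lemma:regularmatching} and cubicity each $v_i$ has exactly one further incident edge $e_i$. Because $C$ is the only reflection cycle and $f$ is nowhere-identity, each $f(e_i)$ is a rotation $r^{t_i}$ with $0<t_i<n$. As $C$ carries at most three distinct reflection values and $n\geq5$, I would first use Lemma~\ref{lemma:nietnulmaken} to replace $f$ by a suitable $f_{a,C}$ and assume $C$ is a non-zero reflection cycle (still the unique one). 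If $C$ is contractible, the disc it bounds contains no reflection-valued edge in its interior, so Theorem~\ref{theorem:nonzerorc} yields a nowhere-zero $n$-flow; the remaining work is to rule out $C$ being non-contractible.

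So suppose $C$ is non-contractible. Since $f$ is a nowhere-identity $D_{2n}$-flow with a unique reflection cycle, Lemma~\ref{lemma:bridgerc} shows $\Gamma$ is bridgeless. Let $X=\{v_1,v_2,v_3\}$. The three half-edges at the $v_i$ not belonging to $C$ cannot all pair up inside $X$, so $|\delta(X)|\in\{1,3\}$ and $V\neq X$; and $|\delta(X)|=1$ would make that edge a bridge, which is excluded. Hence $e_1,e_2,e_3$ are distinct and $\delta(X)=\{e_1,e_2,e_3\}$, all rotation-valued. Writing $f(c_i)=r^{a_i}s$ and orienting each $e_i$ into $v_i$, the vertex relation at $v_i$ (via $r^bs\cdot r^cs=r^{b-c}$) forces $t_i$ to equal $\pm(a_j-a_k)\bmod n$, where $c_j,c_k$ are the two $C$-edges at $v_i$ and the sign $\varepsilon_i\in\{\pm1\}$ is read off from the cyclic order $\pi_{v_i}$. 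On the other hand $V\setminus X$ consists only of rotation edges, so the restriction of $f$ there is a $\Z_n$-flow with boundary $\{e_1,e_2,e_3\}$, and summing Kirchhoff's law over $V\setminus X$ gives $t_1+t_2+t_3\equiv0\bmod n$.

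The decisive point links the rotation data to the topology: if $\varepsilon_1=\varepsilon_2=\varepsilon_3$ — equivalently, the dangling edges $e_1,e_2,e_3$ all lie on the same side of $C$ — then tracing the face of $\Gamma$ on the other side of $C$ produces a triangular face whose boundary is $C$; since the embedding is cellular this face is a disc, so $C$ is contractible, contrary to assumption. Hence exactly one $\varepsilon_i$ is the odd one out, and by the cyclic symmetry of $v_1,v_2,v_3$ I may take it to be $\varepsilon_3$. Substituting the three relations into $t_1+t_2+t_3\equiv0$, the two equal terms partly cancel and one is left with $2(a_1-a_2)\equiv0\bmod n$; since $n$ is odd, $a_1\equiv a_2$, and then the relation at $v_3$ gives $t_3\equiv\varepsilon_3(a_1-a_2)\equiv0\bmod n$, contradicting $0<t_3<n$. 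So $C$ is contractible and the first case applies. I expect the main obstacle to be this geometric step — verifying carefully that ``all dangling edges on one side of $C$'' forces a genuine disc face bounded by $C$, hence a contractible $C$ — together with fixing the signs $\varepsilon_i$ in the $D_{2n}$ vertex relations so that the cut-set identity collapses to $2(a_1-a_2)\equiv0$.
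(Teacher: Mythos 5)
Your proposal is correct and follows essentially the same route as the paper: reduce to a non-zero reflection cycle, settle the contractible case via Theorem~\ref{theorem:nonzerorc}, and refute non-contractibility by ruling out parallel edges through Lemma~\ref{lemma:bridgerc} and then combining the three Kirchhoff relations with the cut-set condition on $\{v_1,v_2,v_3\}$ to obtain $2(a_i-a_j)\equiv 0 \pmod n$, which is impossible for odd $n$. Your explicit justification of the geometric step --- that having all three dangling edges on one side of $C$ would make $C$ bound a face and hence be contractible --- is a welcome elaboration of the point the paper passes over with ``the cyclic orders may be taken as follows.''
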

\begin{proof}
Without loss of generality, we may assume that the unique reflection cycle $C$ is non-zero. We show that $C$ is contractible. If this is true, then Theorem \ref{theorem:nonzerorc} settles the proposition. Assume, to the contrary, that $C$ is non-contractible. Let $v_1,v_2$ and $v_3$ denote the vertices on $C$. If there are parallel edges between $v_1,v_2$ and $v_3$, then $\Gamma$ has a bridge $e$. But then $f$ cannot exist by Lemma \ref{lemma:bridgerc}.\\
\indent Write $e_1 = v_1v_2, e_2 = v_2v_3$ and $e_3 = v_3v_1$. For $i = 1,2,3$, let $e_{3+i}$ be the remaining edge incident with $v_i$. The above shows that $e_{3+i} = e_{3+j}$ if and only if $i = j$. As $C$ is non-contractible, the cyclic orders may be taken as follows 
\[
\pi_{v_{1}} = (e_1e_3e_4), \pi_{v_2} = (e_1e_2e_5) \text{ and }\pi_{v_3} = (e_3e_2e_6).
\]
Let $f(e_{3+i}) = r^{a_i}$, with $0 < a_i < n$ and assume that $e_{3+i}$ is directed towards $v_i$, for $i = 1,2,3$. Similarly as in the proof of Proposition \ref{proposition:lengte2}, the Kirchhoff equations imply that $a_4+a_6 \equiv a_5 \text{ mod }n$. However, the cut-set condition implies that $a_4 + a_6 \equiv -a_5 \text{ mod }n$. In conclusion, such an $f$ cannot exist for odd $n$.
\end{proof}

\noindent In view of Seymour's $6$-flow theorem \cite{seymour81}, Proposition \ref{proposition:lengte3} is only interesting for the case $n = 5$ for bridgeless cubic graphs.\\
\indent Having discussed unique reflection cycles of lengths $2$ and $3$, it is natural to consider longer reflection cycles. Although something can be said about the case of a unique reflection cycle having length $4$, we do not include the result here, as there are many case distinctions and calculations involved. In the next section, we restrict attention to snarks. As snarks have girth at least $5$, certainly none of the before-mentioned propositions apply.

\subsection{Snarks}\label{subsection:snarks}

A \textit{snark} is a simple, connected, bridgeless cubic graph that is not $3$-edge colorable (i.e., without a nowhere-zero $4$-flow) and with girth at least $5$. Snarks are interesting because they are potential (minimal) counterexamples to many important conjectures, e.g., Tutte's $5$-flow conjecture \cite{tutte54} and the cycle double cover conjecture \cite{seymour79,szekeres73}. Any nowhere-identity $D_8$-flow on a snark necessarily has a reflection cycle. Hence, the question that we will address in this section is whether snarks admit nowhere-identity dihedral $4$-flows. The following theorem is derived from Lemma \ref{lemma:vermenigmets}, and is due to Lex Schrijver.

\begin{theorem}\label{theorem:removal}
Let $\Gamma = (V,E)$ be a cubic graph. If there exists an $e' \in E$ such that $\Gamma \setminus e'$ has a nowhere-zero $n$-flow, then $\Gamma$ has a nowhere-identity dihedral $n$-flow for any embedding for which $e'$ is contained in a simple contractible cycle. 
\end{theorem}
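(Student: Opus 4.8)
\noindent The plan is to upgrade the given nowhere-zero $n$-flow on $\Gamma\setminus e'$ to a nowhere-identity dihedral $n$-flow on $\Gamma$ in two steps: first extend it to all of $\Gamma$ by a harmless ``padding'' value on $e'$, and then apply the dihedral-flow version of Lemma~\ref{lemma:vermenigmets} along a simple contractible cycle through $e'$ to eliminate that padding value.

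First I would take the nowhere-zero $n$-flow $g$ on $\Gamma\setminus e'$ — a $\Z$-flow with values in $\{\pm1,\dots,\pm(n-1)\}$ — and read it as a $\D^{<n}$-valued flow taking only rotation values, via $a\mapsto\begin{psmallmatrix}1&a\\0&1\end{psmallmatrix}$. Since rotations commute, this is insensitive to the cyclic orders, so it is genuinely a flow with respect to the rotation system that $\Gamma\setminus e'$ inherits from the embedding of $\Gamma$. Now define $f_0\colon E\to\D^{<n}$ by $f_0(e)=g(e)$ for $e\ne e'$ and $f_0(e')=\begin{psmallmatrix}1&0\\0&1\end{psmallmatrix}$. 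I would check that $f_0$ is a $\D^{<n}$-flow of the embedded graph $\Gamma$: at any vertex not incident with $e'$ nothing changes, and at each endpoint of $e'$ the vertex product in $\Gamma$ is obtained from the vertex product in $\Gamma\setminus e'$ by inserting the identity in the slot of $e'$, hence is still trivial. (The hypothesis that $e'$ lies on a cycle is already doing work here: it forbids $e'$ from being a bridge, and a loop $e'$ would leave a pendant vertex in $\Gamma\setminus e'$, which has no nowhere-zero flow, so these degenerate cases cannot occur.)

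Next, let $C$ be a simple cycle of $\Gamma$ that is contractible in $S$ and contains $e'$, bounding a disk $D$. Since $f_0$ takes only rotation values everywhere, it in particular evaluates to no reflection on the edges of the interior $D^{\circ}$, so the remark following Lemma~\ref{lemma:vermenigmets} applies: the map $(f_0)_C$, obtained from $f_0$ by right-multiplying the value of every edge of $C$ by $\begin{psmallmatrix}-1&0\\0&1\end{psmallmatrix}$, is again a dihedral $n$-flow. Finally I would verify it is nowhere-identity. An edge $e\notin C$ keeps its value $g(e)\ne 1$ (note $e\ne e'$ since $e'\in C$). An edge $e\in C$ receives a value $\begin{psmallmatrix}1&a\\0&1\end{psmallmatrix}\begin{psmallmatrix}-1&0\\0&1\end{psmallmatrix}=\begin{psmallmatrix}-1&a\\0&1\end{psmallmatrix}$ with $|a|<n$, which is a reflection, hence not the identity and still in $\D^{<n}$. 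Thus $(f_0)_C$ is the desired nowhere-identity dihedral $n$-flow of $\Gamma$.

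Essentially all of the analytic content is supplied by Lemma~\ref{lemma:vermenigmets} (in the dihedral-$n$-flow form noted in the remark after it), so there is no serious obstacle. The only points that need care are the bookkeeping that ``padding $e'$ by the identity'' really produces a flow with respect to the ambient rotation system, and the observation that $e'$ genuinely lies on the cycle $C$, so that the twist along $C$ converts the one offending edge — together with the rest of $C$ — into reflections rather than identities.
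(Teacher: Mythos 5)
Your proposal is correct and follows essentially the same route as the paper: extend the nowhere-zero $n$-flow of $\Gamma\setminus e'$ by the identity on $e'$, then twist along the simple contractible cycle $C$ through $e'$ using the dihedral-$n$-flow version of Lemma~\ref{lemma:vermenigmets} noted in the remark after it, and observe that all edges of $C$ become reflections while the others retain their nonzero rotation values. The extra bookkeeping you supply (that the padding is a flow, that values stay in $\D^{<n}$) is exactly what the paper leaves implicit.
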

\begin{proof}
Let $f$ be a nowhere-zero $n$-flow on $\Gamma \setminus e'$. Consider an embedding of $\Gamma$ for which $e'$ is contained in a simple contractible cycle $C$. Let $f'$ be the $n$-flow on $\Gamma$ that agrees with $f$ on $\Gamma \setminus e'$ and is zero on $e'$. By the remark following Lemma \ref{lemma:vermenigmets}, $f'_C$ is a dihedral $n$-flow and is immediately seen to be nowhere-identity.
\end{proof}

\noindent There also exists an edge-contraction version of Theorem \ref{theorem:removal}, in which the deletion $\Gamma \setminus e'$ is replaced by the contraction $\Gamma/e'$. As its proof follows the same line as the proof given above, we do not include it here. In the first application of Theorem \ref{theorem:removal} below, we use the fact that a facial walk in an (cellullarly) embedded graph is a contractible cycle.\\
\indent The smallest snark is the ubiquitous Petersen graph. The (orientable) surfaces in which it can be embedded are the tori with one, two and three holes. In all cases, the Petersen graph has nowhere-identity $D_{2n}$-flows, for $n \geq 3$, by Theorem \ref{theorem:devos}. We prove that it also has nowhere-identity dihedral $4$-flows. In the following, we say a graph $\Gamma$ has a nowhere-identity dihedral $4$-flow if for each orientable surface $S$ in which $\Gamma$ can be embedded there is some embedding of $\Gamma$ in $S$ which has a nowhere-identity dihedral $4$-flow. 

\begin{proposition}\label{proposition:petersen}
The Petersen graph has a nowhere-identity dihedral $4$-flow.
\end{proposition}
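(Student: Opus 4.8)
The plan is to invoke Theorem~\ref{theorem:removal}, for which I first record that $\Gamma \setminus e$ has a nowhere-zero $4$-flow for \emph{every} edge $e$ of the Petersen graph $\Gamma$. Since $\Gamma$ is edge-transitive it suffices to treat one edge: pick $e$ in the perfect matching $M$ complementary to a $2$-factor of $\Gamma$, the latter being a disjoint union of two $5$-cycles $C_1, C_2$. Then $E\setminus e$ is the union of two even subgraphs, namely $C_1 \cup C_2$ and the four remaining matching edges together with four suitably chosen edges of $C_1 \cup C_2$; this gives a nowhere-zero flow with values in the Klein four group, hence a nowhere-zero $4$-flow by Theorem~\ref{theorem:abelian}. (Equivalently: suppressing the two degree-$2$ vertices of $\Gamma\setminus e$ produces a cubic graph on eight vertices that is readily checked to be $3$-edge colorable, and the associated nowhere-zero $4$-flow pulls back along the suppression.)

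For the tori with one and with two holes I would exhibit explicit rotation systems of $\Gamma$ realising embeddings into these two surfaces --- verifying via Euler's formula~(\ref{equation:euler}) that the numbers of faces are $5$ and $3$ respectively --- in which some facial walk is a \emph{simple} cycle, for instance a pentagon (automatically simple, as $\Gamma$ has girth~$5$). Choosing $e'$ to be an edge on such a face, its boundary is a simple contractible cycle through $e'$, so Theorem~\ref{theorem:removal} applies and produces a nowhere-identity dihedral $4$-flow. The only point to settle here is the existence of such embeddings, a finite and routine check once a rotation system is written down.

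The torus with three holes is the genuinely delicate case, and is where I expect the real work to lie. By Euler's formula every embedding of $\Gamma$ into it has exactly one face, and a one-face cellular embedding of a connected graph in a surface of positive genus has no contractible cycle at all: an innermost contractible simple cycle would bound a disk meeting $\Gamma$ in a forest, so that disk minus the forest would be a face, whence --- the embedding having only one face --- the whole surface would be contained in the corresponding closed disk, which is absurd. Thus Theorem~\ref{theorem:removal} is unavailable; and so is the machinery of Section~\ref{section:pos}, since $\Gamma$, being a snark, has no nowhere-zero $4$-flow, so every nowhere-identity dihedral $4$-flow of $\Gamma$ takes a reflection value and hence has a non-empty $2$-regular reflection subgraph (Lemma~\ref{lemma:regularmatching}), all of whose components are now non-contractible --- out of reach of Theorem~\ref{theorem:nonzerorc}, while Propositions~\ref{proposition:lengte2} and~\ref{proposition:lengte3} are vacuous at girth~$5$. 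I would therefore finish this case by brute force: exhibit a one-face rotation system $\pi$ of $\Gamma$ and a map $f\colon E\to\D^{<4}$, and check directly that $f$ is a nowhere-identity dihedral $4$-flow with respect to $\pi$ by verifying Kirchhoff's law at the ten vertices. Such an $f$ exists by a finite (computer-aided) search over the one-face rotation systems and the $\D^{<4}$-valued assignments --- no cut-set obstruction of the kind in Lemma~\ref{lemma:commsubgroup} can intervene, as $\Gamma$ is bridgeless --- after which only a routine verification remains.
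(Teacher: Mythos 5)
Your overall strategy coincides with the paper's: establish that $\Gamma\setminus e'$ has a nowhere-zero $4$-flow, handle the $1$-torus and $2$-torus embeddings via Theorem~\ref{theorem:removal} by finding embeddings with a simple (pentagonal) facial walk, and treat the $3$-torus embedding, where every embedding has a single face, by an explicit construction. The paper does exactly this, down to the same justification that the suppressed $8$-vertex cubic graph is $3$-edge colorable because the Petersen graph is the smallest snark; your correct observation that a one-face cellular embedding in a positive-genus surface admits no contractible simple cycle is a nice sharpening of why Theorem~\ref{theorem:removal} is genuinely unavailable there.

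There is, however, a real gap in the $3$-torus case: you reduce it to ``such an $f$ exists by a finite (computer-aided) search,'' but you neither run the search nor exhibit a witness, and the absence of the cut-set obstruction of Lemma~\ref{lemma:commsubgroup} (which only says bridgelessness removes one \emph{necessary} condition) gives no guarantee that the search succeeds. Since this is the one case where all the paper's positive machinery fails --- as you yourself note, every nowhere-identity dihedral $4$-flow here must have non-contractible reflection cycles --- the existence claim for this surface is exactly the nontrivial content, and asserting that a search would find a flow is not a proof. The paper closes this by writing down a concrete one-face rotation system (reversing the cyclic order at two specific vertices) together with an explicit nowhere-identity dihedral $3$-flow (hence a fortiori a dihedral $4$-flow) and verifying Kirchhoff's law at the ten vertices; your argument becomes complete only once such an explicit assignment is produced and checked. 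The deferral of the explicit rotation systems for the $1$- and $2$-torus is comparatively harmless, since the paper's constructions (the standard plane drawing with all-clockwise rotations, and the same with two rotations reversed) confirm that pentagonal simple facial walks do occur there.
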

\begin{proof}
Consider the (usual) drawing of the Petersen graph in the plane as a pentagon with an inner pentagram that has five spokes. Let all vertices be given a consistent cyclic order (e.g., clockwise). Then the face tracing algorithm ($\S 3.2.6$ of \cite{gross87}) yields three faces and the graph is embedded in the $2$-torus by Euler's formula (\ref{equation:euler}). Removing any edge from the face that corresponds to the walk along the outer pentagram, which is a simple facial walk, results in a subdivision $\Gamma'$ of a cubic graph $\Gamma$ on eight vertices. As the Petersen graph is the smallest snark, $\Gamma$ has a nowhere-zero $4$-flow, hence so does $\Gamma'$. By Theorem \ref{theorem:removal}, this embedding of the Petersen graph has a nowhere-identity dihedral $4$-flow.\\
\indent We consider the same rotation system as above, but reverse the cyclic order at two vertices on the outer pentagram that are a distance two apart. Then there are five faces and hence the Petersen graph is embedded in the $1$-torus. The five spokes of the inner pentagram lie on a simple facial walk of length five. By the same argument as before, this embedded Petersen graph permits a nowhere-identity dihedral $4$-flow.\\
\indent Lastly, there exists an embedding of the Petersen graph in the $3$-torus. In this case, there is only one face and hence we cannot apply Theorem \ref{theorem:removal} directly. We give a nowhere-identity dihedral $3$-flow. Consider the drawing of the Petersen graph in Figure \ref{fig:Petersen}.\\

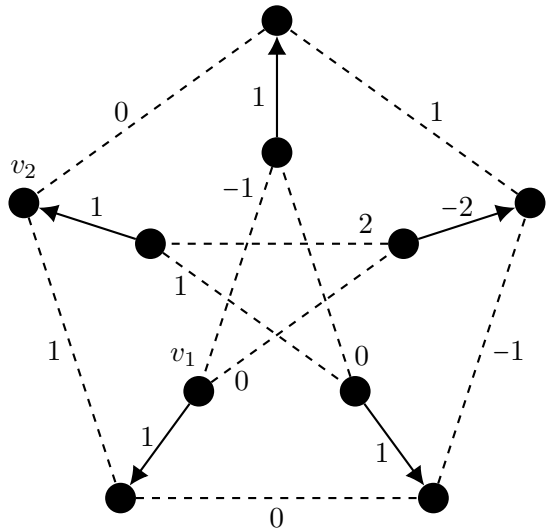
\begin{figure}[H]
\centering
\begin{tikzpicture}
[scale=0.7,vertex_style/.style={draw,circle,very thick,fill=black}]
 
\useasboundingbox (-5.05,-4.4) rectangle (5.1,5.25);
 
\begin{scope}[rotate=90]
\node[label={},style={draw,circle,very thick,fill=black}] (1) at (canvas polar cs: radius=2.5cm, angle=0){};
\node[label={},style={draw,circle,very thick,fill=black}] (2) at (canvas polar cs: radius=2.5cm, angle=72){};
\node[label={\hspace{-4mm}$v_1$},style={draw,circle,very thick,fill=black}] (3) at (canvas polar cs: radius=2.5cm, angle=144){};
\node[label={},style={draw,circle,very thick,fill=black}] (4) at (canvas polar cs: radius=2.5cm, angle=216){};
\node[label={},style={draw,circle,very thick,fill=black}] (5) at (canvas polar cs: radius=2.5cm, angle=288){};

\node[label={},style={draw,circle,very thick,fill=black}] (6) at (canvas polar cs: radius=5cm, angle=0){};
\node[label={\hspace{0mm}$v_2$},style={draw,circle,very thick,fill=black}] (7) at (canvas polar cs: radius=5cm, angle=72){};
\node[label={},style={draw,circle,very thick,fill=black}] (8) at (canvas polar cs: radius=5cm, angle=144){};
\node[label={},style={draw,circle,very thick,fill=black}] (9) at (canvas polar cs: radius=5cm, angle=216){};
\node[label={},style={draw,circle,very thick,fill=black}] (10) at (canvas polar cs: radius=5cm, angle=288){};

\end{scope}

\draw[style={->, thick,color=black}] (1)--(6)node[pos=0.4,anchor=east]{$1$};
\draw[style={->, thick,color=black}] (2)--(7)node[pos=0.4,anchor=south]{$1$};
\draw[style={->, thick,color=black}] (3)--(8)node[pos=0.4,anchor=east]{$1$};
\draw[style={->, thick,color=black}] (4)--(9)node[pos=0.6,anchor=east]{$1$};
\draw[style={->, thick,color=black}] (5)--(10)node[pos=0.4,anchor=south]{$-2$};

\draw[style={-, thick}] (1)--(3) [dashed] node[pos=0.1,anchor=east]{$-1$};
\draw[style={-, thick}] (2)--(4) [dashed] node[pos=0.1,anchor=north]{$1$};
\draw[style={-, thick}] (3)--(5) [dashed] node[pos=0.17,anchor=north]{$0$};
\draw[style={-, thick}] (4)--(1) [dashed] node[pos=0.1,anchor=west]{$0$};
\draw[style={-, thick}] (5)--(2) [dashed] node[pos=0.1,anchor=south]{$2$};

\draw[style={-, thick}] (6)--(7) [dashed] node[pos=0.5,anchor=east]{\hspace{-7mm}$0$};
\draw[style={-, thick}] (7)--(8) [dashed] node[pos=0.5,anchor=east]{$1$};
\draw[style={-, thick}] (8)--(9) [dashed] node[pos=0.5,anchor=north]{$0$};
\draw[style={-, thick}] (9)--(10) [dashed] node[pos=0.5,anchor=west]{$-1$};
\draw[style={-, thick}] (10)--(6) [dashed] node[pos=0.5,anchor=west]{\hspace{2mm}$1$};
\end{tikzpicture}
\caption{The Petersen graph}\label{fig:Petersen}
\end{figure}

\noindent Let the cyclic order of the vertices $v_1$ and $v_2$ be given by $\rcirclearrowleft$. The other cyclic orders are $\lcirclearrowright$. Then there is one face and the Petersen graph is embedded in the $3$-torus. In Figure \ref{fig:Petersen} a nowhere-identity dihedral $3$-flow $f$ is specified as follows: $f$ evaluates to reflections on dashed edges. The other edges are rotations, and therefore are oriented. The value on an edge $e$ denotes the upper right entry of the matrix $f(e)$.
\end{proof}

\begin{lemma}\label{lemma:ydelta}
Let $\Gamma$ be an embedded cubic graph that has a nowhere-identity dihedral $4$-flow. The cubic graph $\Gamma'$ obtained from $\Gamma$ by replacing any of its vertices by a triangle, is embedded in the same surface as $\Gamma$ and also has a nowhere-identity dihedral $4$-flow.
\end{lemma}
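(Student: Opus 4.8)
The plan is to take the given nowhere-identity dihedral $4$-flow $f$ on $\Gamma$, look locally at a vertex $v$ of degree $3$ with incident edges $e_1,e_2,e_3$ in the cyclic order $\pi_v$, replace $v$ by a triangle $T$ with vertices $w_1,w_2,w_3$ (where $w_i$ meets the half-edge of $e_i$), equip $T$ with the cyclic orders inherited from the planar $Y$--$\Delta$ operation, and then define $f'$ on $\Gamma'$ by keeping $f$ on all old edges and assigning carefully chosen values in $\D^{<4}$ to the three new triangle edges so that Kirchhoff's law holds at each $w_i$. Since the $Y$--$\Delta$ transformation is a local planar move, the embedding of $\Gamma'$ lies in the same surface as that of $\Gamma$; this is essentially the remark before the lemma that a facial disc around $v$ can be enlarged to contain $T$, so no new genus is introduced, and I would state this citing \cite{mohar01}. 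The real content is the local flow computation.

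For the computation, write $g_i := f(e_i) \in \D^{<4}\setminus\{1\}$, oriented so all three point toward $v$; the hypothesis says $g_1 g_2 g_3 = 1$ in $\D^{<4}$ (product in the order $\pi_v$). In $\Gamma'$ I need values $h_1,h_2,h_3$ on the triangle edges $f_1 = w_2 w_3$, $f_2 = w_3 w_1$, $f_3 = w_1 w_2$ (with some fixed orientations) so that at $w_1$ the product of $g_1$, $h_2$, $h_3^{-1}$ (in the inherited cyclic order, with appropriate inverses for outgoing edges) is the identity, and similarly at $w_2$, $w_3$. This is a system of three equations in the nonabelian group $\D^{<4}$; multiplying all three relations together forces $g_1 g_2 g_3 = 1$, which holds, so the system is consistent as a system over $\D$, and it has a one-parameter family of solutions (choose $h_1$ freely, then $h_2,h_3$ are determined). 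The task is then to check that among this family there is at least one choice making all of $h_1,h_2,h_3$ non-identity and of the form $\begin{psmallmatrix}\pm 1 & a\\ 0 & 1\end{psmallmatrix}$ with $|a|<4$. Because the constraint only pins down the upper-right entries modulo nothing (we are working in $\D$, not $D_8$) and the signs, one sees there is ample room: the failure locus (where some $h_i$ becomes the identity) is a finite set of parameter values, and one simply avoids it. I would organize this as a short case analysis on whether each $g_i$ is a rotation or a reflection — the parity of the number of reflections among $g_1,g_2,g_3$ is even, so either zero or two of them are reflections — and in each case exhibit an explicit valid triple $(h_1,h_2,h_3)$ (for instance, when all $g_i$ are rotations one can take the triangle edges to carry suitable rotations summing correctly with slack $|a|\le 3$; when two $g_i$ are reflections, route a reflection through the triangle and rotations on the other two triangle edges).

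**The main obstacle** I anticipate is not the existence of a solution in $\D$ — that is forced by associativity — but verifying the bound $|a| < 4$ simultaneously for all three new edges and the non-identity condition, since the original values $g_i$ already use up part of the budget $|a|\le 3$ and the triangle must absorb the remaining discrepancy. The resolution is that one free parameter $h_1$ can be shifted by any rotation $r^t$, $t\in\Z$, changing $h_2$ and $h_3$ by $r^{\mp t}$; by choosing $t$ to rebalance, the upper-right entries of $h_1,h_2,h_3$ can be kept small (within $\{-3,\dots,3\}$, in fact one can keep each in $\{-2,-1,1,2\}$), because distributing a fixed total "charge" among three edges, each allowed any integer, clearly admits a small-norm representative, and avoiding the identity costs at most excluding one value of $t$. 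I would present the explicit assignments in the two parity cases and leave the verification of Kirchhoff's law at $w_1,w_2,w_3$ as a direct check, noting that it is the same $2\times 2$ matrix multiplication as in the proof of Theorem \ref{theorem:equivalentie}. I would close by remarking that the construction is reversible — contracting the triangle $T$ back to a vertex sends $f'$ to a nowhere-identity dihedral $4$-flow on $\Gamma$ — so the operation is compatible in both directions, which is the form in which the lemma will be applied to snarks in the sequel.
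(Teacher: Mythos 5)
Your proposal is correct and follows essentially the same route as the paper: preserve the embedding under the local $Y$--$\Delta$ move, split into the two cases (zero or two reflections among the three incident edge values), and extend the flow to the triangle edges using the one remaining degree of freedom to keep all three new values non-identity and within $\D^{<4}$ — exactly the role of the paper's parameter $z$ with $0 \leq |z| \leq 3$, $1 \leq |z-a| \leq 3$, $1 \leq |z-b| \leq 3$. The only cosmetic difference is that in the all-rotations case the paper assigns \emph{reflections} (with entries $a$, $-b$, $0$) to the triangle edges, which sidesteps the non-identity and budget checks entirely since reflections are never the identity.
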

\begin{proof}
By Euler's formula (\ref{equation:euler}), $\Gamma'$ embeds in the same surface as $\Gamma$. A nowhere-identity dihedral $4$-flow $f$ on $\Gamma$ either evaluates to three rotations or to one rotation and two reflections at a vertex. The following figure shows that in both cases we can extend $f$ to a nowhere-identity dihedral $4$-flow on $\Gamma'$.
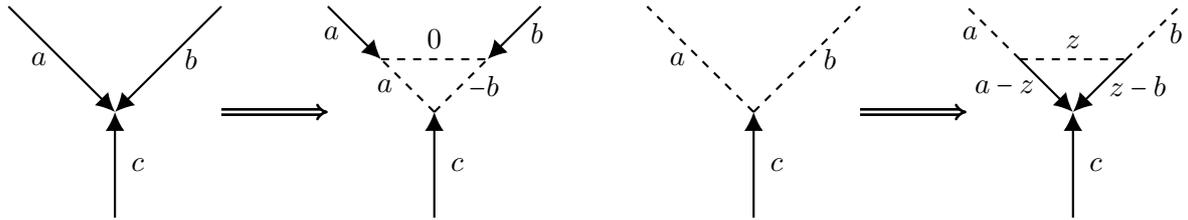
\begin{figure}[H]
\centering
\begin{tikzpicture}
[scale=1.4]
\draw[<-,thick]
(0,0) -- (1,1) node[pos=0.5]{\hspace{6mm}$b$};
\draw[<-,thick]
(0,0) -- (-1,1) node[pos=0.5]{\hspace{-6mm}$a$};
\draw[<-,thick]
(0,0) -- (0,-1) node[pos=0.5]{\hspace{6mm}$c$};

\draw[-Implies, line width=1pt, double distance=1pt]
(1,0) -- (2,0) node[pos=0.5]{};

\draw[style={-, thick}]
(3,0) -- (3.5,0.5) [dashed] node[pos=0.5]{\hspace{6mm}$-b$};
\draw[style={-, thick}]
(3,0) -- (2.5,0.5) [dashed] node[pos=0.5]{\hspace{-6mm}$a$};
\draw[style={-, thick}]
(2.5,0.5) -- (3.5,0.5) [dashed] node[pos=0.5,anchor=south]{\vspace{-6mm}$0$};
\draw[<-,thick]
(3,0) -- (3,-1) node[pos=0.5]{\hspace{6mm}$c$};
\draw[<-,thick]
(3.5,0.5) -- (4,1) node[pos=0.5]{\hspace{6mm}$b$};
\draw[<-,thick]
(2.5,0.5) -- (2,1) node[pos=0.5]{\hspace{-6mm}$a$};

\draw[style={-, thick}]
(6,0) -- (7,1) [dashed] node[pos=0.5]{\hspace{6mm}$b$};
\draw[style={-, thick}]
(6,0) -- (5,1) [dashed] node[pos=0.5]{\hspace{-6mm}$a$};
\draw[<-,thick]
(6,0) -- (6,-1) node[pos=0.5]{\hspace{6mm}$c$};

\draw[-Implies, line width=1pt, double distance=1pt]
(7,0) -- (8,0) node[pos=0.5]{};

\draw[<-,thick]
(9,0) -- (9.5,0.5) node[pos=0.5]{\hspace{10mm}$z-b$};
\draw[<-,thick]
(9,0) -- (8.5,0.5) node[pos=0.5]{\hspace{-11mm}$a-z$};
\draw[style={-, thick}]
(8.5,0.5) -- (9.5,0.5) [dashed] node[pos=0.5,anchor=south]{\vspace{-6mm}$z$};
\draw[<-,thick]
(9,0) -- (9,-1) node[pos=0.5]{\hspace{6mm}$c$};
\draw[style={-, thick}]
(9.5,0.5) -- (10,1) [dashed] node[pos=0.5]{\hspace{6mm}$b$};
\draw[style={-, thick}]
(8.5,0.5) -- (8,1) [dashed] node[pos=0.5]{\hspace{-6mm}$a$};
\end{tikzpicture}
\caption{Replacing a vertex by a triangle}\label{fig:ydelta}
\end{figure}
\noindent In Figure \ref{fig:ydelta} the dashed edges are reflections and the other edges are rotations. The value on an edge $e$ denotes the upper right entry of the matrix $f(e)$. The integer $z$ in the rightmost picture is chosen such that $0 \leq |z| \leq 3, 1 \leq |z-a| \leq 3$ and $1 \leq |z-b| \leq 3$.
\end{proof}

\begin{proposition}\label{proposition:cubics16}
All bridgeless cubic graphs on at most $16$ vertices have a nowhere-identity dihedral $4$-flow. 
\end{proposition}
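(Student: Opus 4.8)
The plan is to reduce the claim for all bridgeless cubic graphs on at most $16$ vertices to a finite, computer-verifiable check combined with the structural lemmas already established. First I would recall that by Theorem \ref{theorem:equivalentie}-type reasoning together with Seymour's $6$-flow theorem, every bridgeless cubic graph has a nowhere-zero $6$-flow; moreover, by the $4$-flow case, a $3$-edge-colorable cubic graph has a nowhere-zero $4$-flow and hence (composing with $\Pi_4$ applied to rotation values, or directly via Theorem \ref{theorem:equivalentie}) admits a nowhere-identity dihedral $4$-flow for a suitable embedding. So the only graphs requiring genuine work are the snarks on at most $16$ vertices. There is exactly one snark on $10$ vertices (the Petersen graph), none on $12$ or $14$, and precisely two snarks on $16$ vertices (the two Blanuša snarks); this enumeration is classical and I would cite it.

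The Petersen graph is handled by Proposition \ref{proposition:petersen}. For the remaining cases I would use the $Y\!-\!\Delta$ structure of the Blanuša snarks: each Blanuša snark is obtained from two copies of a ``Petersen graph minus a vertex'' fragment glued along a suitable edge cut, and in particular both arise from the Petersen graph by a sequence of elementary operations — edge subdivisions/contractions and vertex-to-triangle expansions — that are exactly the moves controlled by Lemma \ref{lemma:ydelta} (vertex-to-triangle replacement) and Theorem \ref{theorem:removal} together with its stated edge-contraction analogue. Concretely, I would exhibit, for each of the two Blanuša snarks and for each orientable surface $S$ in which it embeds, an explicit embedding obtained from a known good embedding of the Petersen graph by applying these operations, and invoke Lemma \ref{lemma:ydelta} and Theorem \ref{theorem:removal} (and its contraction version) to transport a nowhere-identity dihedral $4$-flow along the way. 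Since the dihedral $4$-flows produced in Proposition \ref{proposition:petersen} and in the proof of Lemma \ref{lemma:ydelta} are all explicit, this yields explicit flows on the Blanuša snarks.

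The main obstacle is bookkeeping the embeddings: Lemma \ref{lemma:ydelta} preserves the surface, but I must make sure that for \emph{every} orientable surface in which a given Blanuša snark embeds I have a corresponding good embedding of the Petersen graph (or of an intermediate graph) to start from, and that the edge used in Theorem \ref{theorem:removal} really does lie on a simple contractible cycle of the relevant embedding — the facial-walk trick from the proof of Proposition \ref{proposition:petersen} is the tool here, since a facial walk is automatically a contractible cycle. If instead one prefers a uniform argument, the cleanest route is purely computational: there are only three graphs to consider beyond the $3$-edge-colorable ones, each has a bounded number of rotation systems on each feasible surface, and for each such embedded graph one checks by exhaustive search over $(\D^{<4})^{|E|}$-assignments (pruned by Kirchhoff's law vertex by vertex) that a nowhere-identity dihedral $4$-flow exists; this is the same kind of computer-aided verification already used in Example \ref{examp:6vertextorus}. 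I would present the structural argument as the main proof and remark that it has been independently confirmed by computer.
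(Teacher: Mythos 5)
Your reduction to the non-$3$-edge-colorable case is the right first move (and matches the paper), but the case analysis that follows contains a factual error that breaks the argument in both directions. The two Blanu\v{s}a snarks have $18$ vertices, not $16$; the Petersen graph is in fact the \emph{only} snark on at most $16$ vertices (the paper itself notes immediately after this proposition that snarks reappear only at $18$ or more vertices). So the work you propose on the Blanu\v{s}a snarks is not needed here --- and, as described, it would likely not go through anyway: a Blanu\v{s}a snark is obtained by gluing two Petersen fragments along a small edge cut, not by vertex-to-triangle expansions or single edge deletions/contractions applied to the Petersen graph, so Lemma~\ref{lemma:ydelta} and Theorem~\ref{theorem:removal} do not transport a flow to it in the way you claim.

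Conversely, restricting the remaining work to snarks leaves a genuine gap: ``bridgeless, cubic and not $3$-edge-colorable'' does not imply ``snark,'' because a snark must in addition be simple with girth at least $5$. The Tietze graph ($12$ vertices, girth $3$), obtained from the Petersen graph by replacing one vertex with a triangle, is bridgeless, cubic, not $3$-edge-colorable, has at most $16$ vertices, and is not a snark; it is precisely the case the paper's proof handles, via Proposition~\ref{proposition:petersen} combined with Lemma~\ref{lemma:ydelta}. Your proposal never addresses it. A smaller point: for the $3$-edge-colorable graphs you should not route through Theorem~\ref{theorem:equivalentie}, which only produces \emph{some} embedding admitting a flow, whereas the convention fixed before Proposition~\ref{proposition:petersen} requires a good embedding in every orientable surface in which the graph embeds; the correct observation is that a nowhere-zero $4$-flow, read as rotation values in $\D^{<4}$, is a nowhere-identity dihedral $4$-flow with respect to \emph{every} embedding.
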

\begin{proof}
We need only to consider snarks, and bridgeless cubic graphs that are not $3$-edge colorable with girth less than $5$. There are two such graphs with at most $16$ vertices: the Petersen graph and the Tietze graph. The Petersen graph has been dealt with in Proposition \ref{proposition:petersen}. The Tietze graph is obtained from the Petersen graph by replacing one of its vertices by a triangle. Hence, Proposition \ref{proposition:petersen} and Lemma \ref{lemma:ydelta} account for this case.
\end{proof}

\noindent Snarks start to appear again among bridgeless cubic graphs with $18$ or more vertices. The next propositions deal with snarks that have additional properties.\\
\indent Clearly, no snark is Hamiltonian. However, many snarks are \textit{hypohamiltonian}: the removal of any vertex results in a Hamiltonian graph. We call a snark \textit{almost Hamiltonian} if there exists a vertex whose removal yields a Hamiltonian graph. Given a snark $\Gamma = (V,E)$, we define
\begin{equation}\label{equation:ah}
V_{\text{aH}} := \{v \in V \mid \Gamma \setminus v \text{ is Hamiltonian}\},
\end{equation}
so that $\Gamma$ is almost Hamiltonian if and only if $V_{\text{aH}} \neq \emptyset$. Given an embedding of $\Gamma$, call a vertex $v$ \textit{simple} if there exists an edge incident with $v$ that is contained in a simple contractible cycle of the embedded graph $\Gamma$. We define
\begin{equation}\label{equation:vertexsimple}
V_{\text{simple}} := \{v \in V \mid v \text{ is simple}\}.
\end{equation}

\begin{proposition}
Let $\Gamma = (V,E)$ be an embedded snark. If $V_{\text{aH}} \cap V_{\text{simple}} \neq \emptyset$, then $\Gamma$ admits a nowhere-identity dihedral $4$-flow.
\end{proposition}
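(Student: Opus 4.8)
The plan is to reduce the statement to Theorem~\ref{theorem:removal}. Fix a vertex $v \in V_{\text{aH}} \cap V_{\text{simple}}$. Since $v \in V_{\text{simple}}$, there is an edge $e' = vu$ incident with $v$ that is contained in a simple contractible cycle of the embedded graph $\Gamma$. By Theorem~\ref{theorem:removal} (applied with $n = 4$) it then suffices to prove that $\Gamma \setminus e'$ admits a nowhere-zero $4$-flow.

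To produce such a flow I would first record the local structure of $\Gamma$ around $e'$. Let $a,b$ be the two neighbours of $v$ other than $u$, and let $c,d$ be the two neighbours of $u$ other than $v$. Since $\Gamma$ is a snark it is simple and has girth at least $5$; hence $u,v,a,b,c,d$ are six pairwise distinct vertices (for instance $c = a$ would force the triangle $uva$), and neither $ab$ nor $cd$ is already an edge of $\Gamma$ (such an edge would again give a triangle through $v$ or $u$). In $\Gamma \setminus e'$ the vertices $v$ and $u$ have degree $2$ and every other vertex has degree $3$. Suppressing $v$ and then $u$ --- a sequence of series reductions, each of which preserves the existence of a nowhere-zero $4$-flow --- yields the graph $\Gamma' := (\Gamma \setminus \{u,v\}) + ab + cd$, which by the observations above is a simple cubic graph, and which is connected because $\Gamma \setminus v$ is (being Hamiltonian).

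The crucial point is that $\Gamma'$ is Hamiltonian. As $v \in V_{\text{aH}}$, the graph $\Gamma \setminus v$ has a Hamilton cycle $H$; it remains a Hamilton cycle after adding the edge $ab$, and since $u$ has degree $2$ in $\Gamma \setminus v$ the cycle $H$ uses both edges $uc$ and $ud$, so suppressing $u$ turns $H$ into a Hamilton cycle of $\Gamma'$. A Hamiltonian cubic graph is $3$-edge-colourable (colour the even Hamilton cycle alternately with two colours and the complementary perfect matching with the third), hence admits a nowhere-zero $4$-flow \cite[Prop.\ $6.4.5(\text{ii})$]{diestel00}. Reversing the two suppressions transports this flow back to $\Gamma \setminus e'$, and Theorem~\ref{theorem:removal} then gives the desired nowhere-identity dihedral $4$-flow on $\Gamma$.

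The only real care needed is in the structural bookkeeping: checking that the double suppression produces a genuine cubic graph with no accidental loops or parallel edges, that it stays connected, and that the Hamilton cycle of $\Gamma \setminus v$ survives both suppressions. Each of these is immediate from simplicity and girth $\geq 5$ of the snark together with $v \in V_{\text{aH}}$, so I expect no substantive difficulty beyond writing it down cleanly.
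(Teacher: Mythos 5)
Your proof is correct and follows essentially the same route as the paper: both exploit the Hamilton cycle of $\Gamma \setminus v$ to obtain a $3$-edge-colouring (hence a nowhere-zero $4$-flow) of the cubic graph underlying the subdivision $\Gamma \setminus e'$, and then invoke Theorem~\ref{theorem:removal}. The only cosmetic difference is that you suppress both degree-$2$ vertices and quote ``Hamiltonian cubic $\Rightarrow$ $3$-edge-colourable,'' whereas the paper keeps the subdivision and writes out the alternating colouring of the odd Hamilton cycle explicitly.
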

\begin{proof}
Let $v \in V_{\text{aH}} \cap V_{\text{simple}}$. Let $v_1,v_2$ and $v_3$ be the neighbors of $v$. They are distinct and not adjacent to one another. For $i = 1,2,3$, let the edge $e_i$ be defined by $e_i = vv_i$. Assume that $e_1$ is contained in a simple contractible cycle. Consider a Hamiltonian cycle in $\Gamma \setminus v$. Color the edges incident with $v_1$ in $\Gamma \setminus v$ with color $1$. The other edges on the Hamiltonian cycle (which is of odd length) are colored alternatingly with the colors $1$ and $2$. The remaining edges get the color $3$.\\
\indent Insert the edges $e_2$ and $e_3$ in $\Gamma \setminus v$ and color them with $3$. The obtained graph is a subdivision of a cubic graph $\Gamma'$. The described coloring induces a $3$-edge coloring on $\Gamma'$. Therefore, $\Gamma'$ admits a nowhere-zero $4$-flow and by Theorem \ref{theorem:removal}, $\Gamma$ has a nowhere-identity dihedral $4$-flow.  
\end{proof}

The following property is best expressed in terms of edges. A snark $\Gamma = (V,E)$ for which there exists an edge $e = uv$ such that $\Gamma \setminus \{u,v\}$ is $3$-edge colorable, is called \textit{almost }2\textit{-vertex critical}. We define
\begin{equation}\label{equation:2vc}
E_{\text{avc}} := \{uv = e \in E \mid \Gamma \setminus \{u,v\} \text{ is }3\text{-edge colorable}\}. 
\end{equation}
Given an embedding of $\Gamma$, an edge that is contained in a simple contractible cycle is called \textit{simple}. We define
\begin{equation}\label{equation:edgesimple}
E_{\text{simple}} := \{e \in E \mid e \text{ is simple}\}.
\end{equation}

\begin{proposition}
Let $\Gamma = (V,E)$ be an embedded snark. If $E_{\text{avc}} \cap E_{\text{simple}} \neq \emptyset$, then $\Gamma$ admits a nowhere-identity dihedral $4$-flow.
\end{proposition}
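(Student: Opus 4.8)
The plan is to follow the template of the preceding proposition, but to \emph{contract} the chosen edge rather than delete one of its endpoints, and to invoke the edge-contraction version of Theorem~\ref{theorem:removal} in place of the deletion version. Fix $e = uv \in E_{\text{avc}} \cap E_{\text{simple}}$ and write $H := \Gamma \setminus \{u,v\}$, which is $3$-edge colorable by hypothesis. The first step is a structural observation: since $\Gamma$ is a snark it is simple with girth at least $5$, so $u$ has two further neighbours $u_1,u_2$ and $v$ has two further neighbours $v_1,v_2$; these four vertices are pairwise distinct and none of them is adjacent to both $u$ and $v$ (any such coincidence or adjacency would produce a triangle), and consequently in $H$ precisely $u_1,u_2,v_1,v_2$ have degree $2$ while every other vertex still has degree $3$.

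Next I would pass from a $3$-edge coloring $c$ of $H$ to the corresponding labelling of $E(H)$ by the three nonzero elements of the Klein four-group $\Z_2 \times \Z_2$, with distinct labels on adjacent edges. For $x \in V(H)$ put $\partial c(x) := \sum_{h \ni x} c(h) \in \Z_2 \times \Z_2$. At a degree-$3$ vertex the three incident labels exhaust $(\Z_2\times\Z_2)\setminus\{0\}$, so $\partial c(x) = 0$; at each of $u_1,u_2,v_1,v_2$ the two incident labels sum to the remaining nonzero element, the ``missing colour'', which I will call $m(u_1),m(u_2),m(v_1),m(v_2)$. The key identity is $m(u_1)+m(u_2)+m(v_1)+m(v_2) = 0$ in $\Z_2 \times \Z_2$, which is immediate from $\sum_{x \in V(H)} \partial c(x) = \sum_{h \in E(H)} 2\,c(h) = 0$ once one notes that only these four vertices contribute a nonzero term.

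Then I would build a nowhere-zero $\Z_2\times\Z_2$-flow $c'$ on $\Gamma/e$: keep the labelling $c$ on the edges of $H$, and give the four edges $uu_1, uu_2, vv_1, vv_2$ meeting the contracted vertex $w$ the labels $m(u_1), m(u_2), m(v_1), m(v_2)$ respectively. Kirchhoff's law, which is orientation-free over $\Z_2\times\Z_2$, then holds at each of $u_1,u_2,v_1,v_2$ (the vertex now carries its two old labels together with its own missing colour, which sum to $0$), at $w$ (by the key identity), and at every other vertex (nothing changed there); moreover $c'$ never takes the value $0$, since $c$ and all four missing colours lie in $(\Z_2\times\Z_2)\setminus\{0\}$. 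Hence $\Gamma/e$ has a nowhere-zero $\Z_2\times\Z_2$-flow, and therefore a nowhere-zero $4$-flow by Theorem~\ref{theorem:abelian}. Since $e \in E_{\text{simple}}$ lies in a simple contractible cycle of the embedded graph $\Gamma$, the edge-contraction version of Theorem~\ref{theorem:removal} (applied with $e' = e$ and $n = 4$) then produces a nowhere-identity dihedral $4$-flow on $\Gamma$, as desired.

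I do not anticipate a serious obstacle; two points need care. First, the degree bookkeeping in $H$ is really a girth argument, needed to guarantee that $u_1,u_2,v_1,v_2$ are four distinct vertices each losing exactly one edge. Second, the decision to contract rather than delete is essential: a $3$-edge coloring of $H$ extends to $\Gamma \setminus e$ only when in addition $m(u_1) = m(u_2)$ and $m(v_1) = m(v_2)$, which cannot be assumed for a given coloring and would have to be arranged via a Kempe-chain exchange, whereas contracting $e$ requires only the weaker identity $m(u_1)+m(u_2)+m(v_1)+m(v_2)=0$, which holds automatically. This is precisely why the edge-contraction version of Theorem~\ref{theorem:removal} is the right tool here.
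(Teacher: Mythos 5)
Your proof is correct, but it takes a genuinely different route from the paper's. The paper \emph{deletes} $e$: it counts, for each colour $i$, the parity of the vertices of $U=\{u_1,u_2,v_1,v_2\}$ missing colour $i$, uses the fact that $\Gamma$ is not $3$-edge colorable to exclude the ``mixed'' pairing (which would let the colouring of $\Gamma\setminus\{u,v\}$ extend to all of $\Gamma$), concludes that $u_1,u_2$ miss a common colour and $v_1,v_2$ miss a common colour, inserts the edges $u_1u_2$ and $v_1v_2$ to get a $3$-edge colorable cubic graph whose subdivision is $\Gamma\setminus e$, and then applies the deletion version of Theorem~\ref{theorem:removal} --- the version that is actually proved in the paper. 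You instead \emph{contract} $e$ and verify Kirchhoff's law directly over $\Z_2\times\Z_2$ using the boundary identity $m(u_1)+m(u_2)+m(v_1)+m(v_2)=0$; this is uniform (no case analysis) and, notably, never uses the hypothesis that $\Gamma$ fails to be $3$-edge colorable --- only simplicity and girth enter. What this buys is a cleaner and slightly more general argument; what it costs is that you must invoke the edge-contraction version of Theorem~\ref{theorem:removal}, which the paper only asserts in a remark (its proof additionally needs that a nowhere-zero $\Z_4$- or $\Z_2\times\Z_2$-flow on $\Gamma/e$ decontracts to an integer $4$-flow on $\Gamma$ vanishing at most on $e$, via Tutte's support-preserving argument), whereas the paper's proof rests only on the fully proved deletion version. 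One side remark of yours is off: you say the deletion route would require a Kempe-chain exchange to arrange $m(u_1)=m(u_2)$ and $m(v_1)=m(v_2)$, but the paper shows these equalities hold automatically for a snark, precisely because their failure would produce a $3$-edge coloring of $\Gamma$. This does not affect the validity of your argument.
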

\begin{proof}
Let $uv = e \in E_{\text{avc}} \cap E_{\text{simple}}$, with $u,v \in V$. Let $u_1$ and $u_2$ be the other neighbors of $u$ in $\Gamma$, and $v_1$ and $v_2$ the other neighbors of $v$. They are distinct and there are no edges between them. Consider a $3$-edge coloring of $\Gamma \setminus \{u,v\}$.\\
\indent For $i = 1,2,3$, the color $i$ appears an even number of times among the edges incident with the vertices in $U := \{u_1,u_2,v_1,v_2\}$. Hence, an even number of vertices in $U$ misses color $i$. If $u_1$ and $u_2$ miss the same color, then so do $v_1$ and $v_2$. In that case, the $3$-edge coloring of $\Gamma \setminus \{u,v\}$ can be extended to a $3$-edge coloring of $\Gamma$, which is a contradiction. Therefore, $u_1$ and $u_2$ miss the same color, say color $1$. Let $2$ be the color that $v_1$ and $v_2$ miss.\\
\indent Consider the graph $\Gamma'$ obtained from $\Gamma \setminus \{u,v\}$ by inserting the edges $u_1u_2$ and $v_1v_2$, and color the edges with colors $1$ and $2$ respectively. Then we have a $3$-edge coloring of $\Gamma'$. Hence there exists a nowhere-identity dihedral $4$-flow on $\Gamma$ by Theorem \ref{theorem:removal}.
\end{proof}

In all of the previous results, the presence of a simple contractible cycle or a simple facial walk in an embedded snark is crucial. An embedding of a graph in a surface in which every facial walk is simple, is called a \textit{strong embedding}. For some time, it was thought that any bridgeless cubic graph admits a strong embedding in a surface of its own genus (see for instance \cite{seymour79}). This statement, however, is proven to be false \cite{richter83}. The following conjecture, which is due to Jaeger, is still open.

\begin{conjecture}(Jaeger \cite{jaeger85})\label{conjecture:jaeger}
Every $2$-connected graph $\Gamma$ has a strong embedding in some orientable surface.
\end{conjecture}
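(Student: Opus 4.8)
The plan cannot be to prove this: the statement is the \emph{Strong Embedding Conjecture} (equivalently the circular, or closed $2$-cell, embedding conjecture) of Jaeger \cite{jaeger85}, a well-known open problem. What follows is the standard line of attack and an explanation of where it stalls.

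First I would record the two facts that frame the problem. If a $2$-connected graph $\Gamma$ is strongly embedded, then its facial walks are cycles covering every edge exactly twice, so the conjecture implies the Cycle Double Cover Conjecture \cite{seymour79,szekeres73}; conversely, a cycle double cover need not be the face set of any embedding, so the Strong Embedding Conjecture is at least as hard. Moreover the standard cubic-graph reductions apply: suppressing degree-$2$ vertices is harmless, a genus-preserving $Y$--$\Delta$ move (the Euler-formula bookkeeping is exactly as in Lemma \ref{lemma:ydelta}) removes triangles, and small edge cuts can be split off, so a minimal counterexample would be a snark. This is precisely why the authors concentrate on snarks, and on the Petersen graph, throughout Section \ref{section:pos}.

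With these reductions in hand, the constructive goal is to build, for a snark $\Gamma$, a closed orientable surface whose faces are cycles covering each edge twice --- that is, a cycle double cover \emph{together with} a coherent rotation system realizing it. The obstruction is entirely local: at every vertex $v$ the incident half-edges, grouped by which cycle of the cover passes through them, must be arrangeable into a \emph{single} cyclic order compatible with all the faces meeting $v$, rather than into several independent cyclic blocks. So the real target is a globally coherent cycle double cover, and the natural approach is iterative repair: start from any candidate, locate a vertex where the rotation condition fails, and locally re-route the two or three offending cycles near $v$ --- a perturbation very much in the spirit of Lemma \ref{lemma:vermenigmets} and Theorem \ref{theorem:removal}, which already show how a single simple contractible cycle lets one alter a flow or an embedding without leaving the relevant class.

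The hard part, and the reason the conjecture remains open, is exactly this repair step: there is no known monovariant forcing the process to terminate, and fixing the rotation at one vertex can destroy the double-cover property or spoil a rotation elsewhere. Unconditional progress is known only under strong extra hypotheses --- for instance for graphs with no Petersen minor, via the structure theory behind the cycle double cover theorem for that class, or for families admitting an explicit short cover with controlled pairwise overlaps --- and a full proof would almost certainly demand the same new idea needed to resolve the Cycle Double Cover Conjecture itself. One should also keep in mind that the freedom to raise the genus is essential here: the stronger assertion that every bridgeless cubic graph strongly embeds in a surface of its \emph{own} genus is false \cite{richter83}.
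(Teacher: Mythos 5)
This statement is Jaeger's Strong Embedding Conjecture, which the paper explicitly presents as an open problem and does not prove, so there is no proof to compare against; you are right to decline to prove it. Your surrounding remarks (the implication toward the cycle double cover conjecture, the reduction to snarks, and the failure of the own-genus version per \cite{richter83}) are consistent with what the paper says in Section \ref{subsection:snarks}.
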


\noindent This conjecture has been verified for $2$-connected projective-planar cubic graphs \cite{ellingham11}.

\section{Further research}\label{section:fr}

In this paper we have studied to some extent when statement (\ref{equation:main}) holds and in which cases it does not. We have only dealt with embeddings of graphs in orientable surfaces. It would be interesting to investigate the non-orientable case. In order to do so, it seems that a non-orientable version of Theorem \ref{theorem:devos} is necessary. To the best of our knowledge, such a theorem has not been discovered.\\
\indent In Section \ref{subsection:nis2} we considered the complexity of deciding the existence of nowhere-identity dihedral $2$-flows. It would be interesting to consider the corresponding problem for the cases $n=3$ and $n=4$. More specifically, we would like to know the answer to the following question.

\begin{question}
Does every planar cubic graph have a nowhere-identity dihedral $3$-flow?
\end{question}

\noindent In fact, we propose the following conjecture, whose confirmation would imply a positive answer to the previous question.

\begin{conjecture}\label{conjecture:litjens}
For every $3$-edge colorable cubic graph, there exists an embedding in a surface of its own genus, with respect to which it has a nowhere-identity dihedral $2$-flow.
\end{conjecture}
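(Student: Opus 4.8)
The plan is to start from a $3$-edge colouring and to engineer a minimum-genus embedding compatible with it. Let $\Gamma=(V,E)$ be a $3$-edge colourable cubic graph and fix a $3$-edge colouring with colour classes $M_1,M_2,M_3$, each a perfect matching. Recall from the proof of Theorem \ref{theorem:equivalentie} that, having chosen an orientation of the edges of $M_3$, the rotation system in which $\pi_v$ equals $(e_1e_2e_3)$ or its reverse according as $e_3$ is directed into or out of $v$ (where $e_i$ is the edge of colour $i$ incident with $v$) yields an embedding of $\Gamma$ carrying a nowhere-identity dihedral $2$-flow. Since $(e_1e_3e_2)$ is the reverse cyclic order of $(e_1e_2e_3)$, reversing the orientation of an edge $uv\in M_3$ reverses the rotation at both $u$ and $v$; interchanging the labels of two colour classes, or passing to a different $3$-edge colouring, produces yet further embeddings, all carrying such a flow. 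Write $\mathcal E(\Gamma)$ for the resulting family of embeddings. By Euler's formula \eqref{equation:euler} and $|E|=\tfrac32|V|$, an embedding of $\Gamma$ has minimum genus precisely when it has the maximum possible number of faces, so the conjecture is equivalent to the assertion that $\mathcal E(\Gamma)$ contains a face-maximal embedding of $\Gamma$.

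First I would pin down the effect of the available moves on the number of faces. For a fixed colouring with $M_3$ distinguished, the reachable embeddings are obtained from a base rotation system by reversing the rotation at a vertex set $W\subseteq V$ that is a union of $M_3$-edges; thus the reachable ``reversal patterns'' form the subgroup of $(\Z/2)^V$ spanned by the edges of $M_3$ viewed as $2$-element subsets of $V$. A single such reversal touches only the corners at two adjacent vertices, so the induced change in the facial structure is local, with $\Delta|F|\in\{-2,0,2\}$, computable from how the (at most three) facial walks thread the corners at $u$ and its $M_3$-partner $v$; iterating then gives control of $\max_{\mathcal E(\Gamma)}|F|$ in terms of the colouring. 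A clean restatement of the target is that some $3$-edge colouring admits a distinguished class $M_3$, an orientation of it, and hence the local rotation configurations of Figure \ref{fig:4ec}, for which the face count is the maximum one.

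The crux --- and the step I expect to be the principal obstacle --- is to show that this local freedom is enough to reach the \emph{global} maximum. The natural route is to take a genuine minimum-genus embedding $\Pi^{\ast}$ of $\Gamma$ and transform it into a member of $\mathcal E(\Gamma)$ by genus-preserving re-embeddings of individual edges. Concretely, fix a $3$-edge colouring, call a vertex \emph{bad} if its local rotation in $\Pi^{\ast}$ does not fit one of the patterns of Figure \ref{fig:4ec} relative to the colouring, and run an extremal argument: among all minimum-genus embeddings and all $3$-edge colourings choose a pair with the fewest bad vertices, then show that a bad vertex can always be repaired --- by reversing the rotation along an $M_3$-edge, or by re-routing one edge --- without losing a face, contradicting minimality. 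Proving that such a repair never increases the genus is where the real work lies: it requires a case analysis of the facial walks through a bad vertex and its $M_3$-partner, and here the extra slack afforded by Lemma \ref{lemma:vermenigmets} --- multiplying the edges of a simple contractible cycle by a reflection does not disturb the flow --- can be exploited to adjust the flow instead of the embedding. As sanity checks one should first settle the planar case (where $|F|$ is maximal and rigid, and where the statement already yields a positive answer to the preceding question via $\D^{<2}\subseteq\D^{<3}$) and verify the conjecture computationally for cubic graphs on few vertices; if the extremal argument resists, a reasonable fallback is to prove the weaker statement that $\mathcal E(\Gamma)$ always contains an embedding within a bounded additive constant of the minimum genus.
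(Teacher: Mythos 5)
The statement you are trying to prove is not proved in the paper at all: it appears in Section~\ref{section:fr} explicitly as an open conjecture, so the only honest comparison is between your proposal and the conjecture itself. Your write-up is a research plan, not a proof, and the entire difficulty is concentrated in the one step you yourself flag as ``the principal obstacle'': showing that the family $\mathcal{E}(\Gamma)$ of embeddings produced by the construction in the proof of Theorem~\ref{theorem:equivalentie} contains a face-maximal (equivalently, minimum-genus) embedding. The reduction of the conjecture to that assertion is correct and clean, but the assertion is then only restated, not established. The claim that a ``bad'' vertex of a minimum-genus embedding ``can always be repaired without losing a face'' is precisely the conjecture in disguise; no mechanism is offered for why such a repair exists, the proposed repair moves are not shown to preserve the face count, and your closing fallback to a weaker additive-error statement concedes that the argument is not in hand. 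Until that step is supplied, nothing has been proved.

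Two concrete technical points within the plan also need attention. First, the assertion that reversing the orientation of an $M_3$-edge changes $|F|$ by an element of $\{-2,0,2\}$ is unjustified: reversing the rotation at a single cubic vertex changes $|F|$ by $0$ or $\pm 2$ (parity is forced by Euler's formula~(\ref{equation:euler})), so reversing at both endpoints of an edge could a priori change it by $\pm 4$; and in any case bounding a single step says nothing about whether the maximum of $|F|$ over the subgroup of reachable reversal patterns attains the global maximum over all rotation systems. Second, ``re-routing one edge'' is not one of the moves that keeps you inside $\mathcal{E}(\Gamma)$ as you defined it, so it cannot be invoked in the repair step without further argument; note also that, by the paper's characterization of nowhere-identity dihedral $2$-flows via special $4$-edge colourings, the set of embeddings of $\Gamma$ admitting such a flow may be strictly larger than $\mathcal{E}(\Gamma)$, so your reformulation is (unnecessarily) stronger than the conjecture. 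The suggested sanity checks and the reformulation in terms of face-maximality are sensible starting points for attacking the problem, but the conjecture remains open after your proposal.
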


In Section \ref{subsection:counting} we briefly discussed the enumeration of nowhere-identity dihedral $n$-flows. The question whether the number of such flows is counted by a quasi-polynomial in $n$ of period $2$ remains open, although lattice point counting methods may prove successful, as in~\cite{kochol02}.\\

\noindent \textit{Acknowledgements.} The author is grateful to Lex Schrijver for his contribution, viz.\ Theorem \ref{theorem:removal}. We also thank Sven Polak, Lex and the referees for their valuable comments regarding both the content and the presentation of this paper. Furthermore, we wish to acknowledge the many fruitful discussions with Guus Regts, Bart Sevenster, Jacob Turner and Llu\'is Vena Cros.

\end{document}